\newcommand{\R}{\mathbb R}
\newcommand{\N}{\mathbb N}
\newcommand{\TC}[2]{{T}_{#1}^{#2}}
\newcommand{\blob}{\bullet}
\newcommand{\figdir}{.}
 \DeclareMathOperator{\Hom}{Hom}
\DeclareMathOperator{\Card}{\#}
\DeclareMathOperator{\GL}{GL}
\DeclareMathOperator{\adj}{adj}
\DeclareMathOperator{\intrinsic}{V}
\renewcommand{\paragraph}[1]{\bigskip\noindent \emph{#1.}}
 \newtheoremstyle{thm2}{0.5 em}{0.5 em}%
     {\slshape}
     {}
     {\scshape}
     {.}
     {0.5 em}
     {\thmname{#1}\thmnumber{ #2}\thmnote{ (#3)}}
 \newtheoremstyle{def2}{0.5 em}{0.5 em}%
     {}
     {}
     {\scshape}
     {.}
     {0.5 em}
     {\thmname{#1}\thmnumber{ #2}\thmnote{ (#3)}}
\theoremstyle{thm2}
\newtheorem{thm}{Theorem}
\newtheorem{prop}[thm]{Proposition}
\newtheorem{lemma}[thm]{Lemma}
\newtheorem*{StrongConjecture}{Strong Asymptotic Conjecture}
\newtheorem*{WeakConjecture}{Weak Asymptotic Conjecture}
\theoremstyle{def2}
\renewcommand{\phi}{\varphi}
\renewcommand{\epsilon}{\varepsilon}
\begin{document}
\title[Asymptotic magnitude of subsets of Euclidean space]
{On the asymptotic magnitude of subsets of\\Euclidean space}
\author{Tom Leinster}
\email{Tom.Leinster@glasgow.ac.uk}
\author{Simon Willerton}
\email{S.Willerton@sheffield.ac.uk}
%


\begin{abstract}
Magnitude is a canonical invariant of finite metric spaces which has its origins in category theory; it is analogous to cardinality of finite sets.  Here, by approximating certain compact subsets of Euclidean space with finite subsets, the magnitudes of line segments, circles and Cantor sets are defined and calculated.  It is observed that asymptotically these satisfy the inclusion-exclusion principle, relating them to intrinsic volumes of polyconvex sets.
\end{abstract}

\maketitle

\section*{Introduction}
In \cite{Leinster:Cardinality} one of us introduced the notion of the Euler characteristic
of a finite category and showed how it linked together various notions of size in mathematics, including the cardinality of sets and the Euler characteristics of topological spaces, posets and graphs.   In \cite{Leinster:MetricSpacesBlogPost,Leinster:Magnitude}
it was shown how to transfer this to a notion  of `magnitude'%
\footnote{The terms `Euler characteristic' and `cardinality' could have been used here, as in~\cite{Leinster:Cardinality} and~\cite{Leinster:MetricSpacesBlogPost}, but we
have decided to use a word with less mathematical ambiguity.}
 of a finite metric space, using the fact that a metric space can be viewed as an
enriched category.

One way of viewing magnitude is as the `effective number of points'.  Consider, for example, the $n$-point metric space in which any two points are a distance $d$ apart.  When $d$ is very small, the magnitude is just greater than $1$ --- there is `effectively only one point'.  As $d$ increases, the magnitude increases, and when $d$ is very large, the magnitude is just less than $n$ --- there are `effectively $n$ points'.
The magnitude of a finite metric space actually first appeared in the biodiversity literature~\cite{SolowPolasky:MeasuringBiologicalDiversity}, under the name `effective number of species', although its mathematical properties were hardly explored.
It should be noted that, contrary to the simple example given above, magnitude can display wild behaviour of various types~\cite{Leinster:Magnitude}: when a space is scaled up, its magnitude can sometimes \emph{decrease}, and there are some exceptional finite metric spaces for which the magnitude is not well-defined.

In this paper --- which requires no category theory --- we consider the notion of magnitude for certain non-finite metric spaces, in particular for certain compact subsets of Euclidean space.  This is done by approximating such a subset $A$ with a sequence of finite subsets
of $A$ and taking the limit of
the corresponding sequence of magnitudes.  In the cases we consider here --- circles,
line-segments and Cantor sets --- as the
subset is scaled up this answer behaves like a linear combination of `intrinsic volumes',
such as the length and Euler characteristic, which satisfy the inclusion-exclusion
principle.   This leads us to conjecture that for a subspace $A$ the magnitude $|A|$ decomposes as follows:
   \[|A|=P(A)+q(A)\]
where $P$ is a function, defined on some class of subsets of Euclidean space, which satisfies $P(A\cup
B)=P(A)+P(B)-P(A\cap B)$ and $q(A)$ tends to zero as $A$ is scaled bigger and bigger.
In other words, \emph{the magnitude of subsets of Euclidean space asymptotically satisfies
the inclusion-exclusion principle}.   Whilst the proof of this for some examples requires only elementary analysis, the proof for circles requires more subtle asymptotic analysis.
Empirical calculations for some subsets in two and three dimensions are
consistent with this and appear elsewhere \cite{Willerton:Heuristics}.

In an earlier version of this paper we were less definite in some of our assertions.  We calculated
the limiting value of approximations of each of the spaces and claimed that that \emph{should} be the value of the magnitude of the space in question, and that it should be independent of the choice of approximating sequence.  Motivated by our work, Meckes~\cite{Meckes:PositiveDefinite} proved that this was indeed the case and that our methods do indeed give unique results (see Section~\ref{Section:MeckesResults} below).  Another development since the time of writing is the use of ``weight measures'', this development means that several calculations presented here can be done more quickly, see \cite{Meckes:PositiveDefinite} and \cite{Willerton:Homogeneous}.  However, the methods presented here seem to be more general as it is not known, and seems unlikely, that every compact metric space, or even every compact subset of Euclidean space, admits a weight measure.  

There will now follow a more detailed description of the magnitude of metric spaces,
the inclusion-exclusion principle and intrinsic volumes.

\subsection*{Asymptotic conjectures}
Given a metric space $X$ with $n$ points one can try to associate to it an invariant called the magnitude; the definition is given in Section~\ref{Section:DefinitionOfMagnitude} and the definition is motivated by category theory in Section~\ref{Section:CategoryTheoreticMotivation}.  If $X$ is a metric subspace of some Euclidean space then it will have a well-defined magnitude (see Section~\ref{Section:MeckesResults}).  However, not every finite metric space has a well-defined magnitude, but every `sufficiently separated' one does (Theorem~\ref{Theorem:SufficientlySeparated}). In particular, if for $t>0$ we define $tX$ to be $X$ scaled by a factor of $t$, so that it is a metric space with the same points as $X$ but with the metric defined by $d_{tX}(x,x'):=td_X(x,x')$, then for $t$ sufficiently large the magnitude $|tX|$ is well-defined and $|tX|\to n$ as $t\to \infty$ (Theorem~\ref{Theorem:AsymptoticFinitePoints}).  So asymptotically, the magnitude is the number of points in the metric space.

Whilst the magnitude in the case of \emph{finite} metric spaces is interesting, not least
for its connections with biodiversity measures (see \cite{Leinster:MaximumEntropy}),
in this paper we consider extending this
notion of magnitude to non-finite metric spaces, primarily in the form of compact
subsets of Euclidean
space with the subspace metric.  In the cases we consider here one interesting feature
which emerges is that the magnitude seems to `asymptotically' obey the inclusion-exclusion principle, where `asymptotically' means  with regard to the space being
scaled up larger and larger.

The inclusion-exclusion principle is embodied in the notion of a valuation.
Let ${\mathcal{D}}$ be some class of subsets of Euclidean space closed under binary intersections.  A \emph{valuation} on ${\mathcal{D}}$ is a real-valued function $P$  on ${\mathcal{D}}$ satisfying the following:
\begin{itemize}
\item for all $A,B\in {\mathcal{D}}$ such that $A\cup
B\in \mathcal{D}$ we have \[P(A\cup
B)=P(A)+P(B)-P(A\cap B);\]
\item $P(\emptyset)=0$.
\end{itemize}

%
Following Klain and Rota~\cite{KlainRota:Book}, we will often take $\mathcal{D}$ to be the collection of \emph{polyconvex sets} in $\R^m$ --- that is those subsets of Euclidean
space which are finite unions of compact, convex sets.   Similarly following Klain and Rota, we define an \emph{invariant valuation} on $\R^m$ to be a valuation on the set of polyconvex subsets of $\R^m$ for which the following
axioms hold.
\begin{itemize}
\item It is invariant under rigid motions.
\item It is continuous on \emph{convex} sets with respect to the Hausdorff topology.
\end{itemize}
Examples of such things include the Euler characteristic and the $m$-dimensional
volume.
Hadwiger's Theorem says that the vector space of invariant valuations on $\R^m$ is $
(m+1)$-dimensional and that there is a canonical basis $\{\intrinsic_0,\dots, \intrinsic_m\}$ where
each $\intrinsic_i$ is scaling-homogeneous in the sense that $\intrinsic_i(tA)=t^i\intrinsic_i(A)$ for any
scaling $t>0$ and any polyconvex set $A$, and is normalized on cubes so that $
\intrinsic_i([0,1]^i)=1$.  The valuation $\intrinsic_i$ is called the \emph{$i$th intrinsic volume}.
For a polyconvex set $A$ in $\R^m$ the $m$th intrinsic volume $\intrinsic_m(A)$ is the
usual $m$-dimensional volume of $A$; the  $(m-1)$th intrinsic volume $\intrinsic_{m-1}(A)$
is half of the ``surface area'', that is half of the $(m-1)$-volume of the boundary of $A$;
and the zeroth intrinsic volume $\intrinsic_0(A)$ is the Euler characteristic.  The other
intrinsic volumes are not so well known --- although the first intrinsic volume of a
cuboid,%
\footnote{A three-dimensional rectangular box.}
length plus width plus height, is beloved of those making restrictions for air-travel carry-on luggage.

It might seem that the $i$th intrinsic volume in $\R^m$ should be denoted by say $\intrinsic_i^m$ to remove ambiguity: however, they are normalized so that this is
unnecessary, namely if $A\subset \R^m$ is a polyconvex set which actually is
contained in a subspace $\R^p\subset \R^m$ then $\intrinsic^p_i(A)=\intrinsic^m_i(A)$.

\begin{table}[tb]
\begin{center}
\begin{tabular}{|c|c|l|}
\hline
\multicolumn{2}{|c|}{subspace $A$}&
\multicolumn{1}{|c|}{magnitude $|A|$}\\\hline
$n$ points&\raisebox{-.45\height}{\begin{picture}(0,0)%
\includegraphics{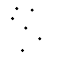}%
\end{picture}%
\setlength{\unitlength}{3947sp}%
\begingroup\makeatletter\ifx\SetFigFont\undefined%
\gdef\SetFigFont#1#2{%
  \fontsize{#1}{#2pt}%
  \selectfont}%
\fi\endgroup%
\begin{picture}(467,461)(142,234)
\end{picture}%
}&$\quad\phantom{\ell/2+{}}n+q_1(A)$\\
ternary Cantor set of length $\ell$&\raisebox{-.45\height}{\begin{picture}(0,0)%
\includegraphics{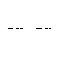}%
\end{picture}%
\setlength{\unitlength}{3947sp}%
\begingroup\makeatletter\ifx\SetFigFont\undefined%
\gdef\SetFigFont#1#2{%
  \fontsize{#1}{#2pt}%
  \selectfont}%
\fi\endgroup%
\begin{picture}(466,460)(143,236)
\end{picture}%
}&$f(\ell)\ell^{\log_32}+q_2(\ell)$\\
closed interval of length $\ell$&\raisebox{-.45\height}{\begin{picture}(0,0)%
\includegraphics{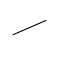}%
\end{picture}%
\setlength{\unitlength}{3947sp}%
\begingroup\makeatletter\ifx\SetFigFont\undefined%
\gdef\SetFigFont#1#2{%
  \fontsize{#1}{#2pt}%
  \selectfont}%
\fi\endgroup%
\begin{picture}(468,462)(143,234)
\end{picture}%
}&$\quad\ell/2 +1$\\
circle of circumference $\ell$&\raisebox{-.45\height}{\begin{picture}(0,0)%
\includegraphics{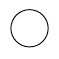}%
\end{picture}%
\setlength{\unitlength}{3947sp}%
\begingroup\makeatletter\ifx\SetFigFont\undefined%
\gdef\SetFigFont#1#2{%
  \fontsize{#1}{#2pt}%
  \selectfont}%
\fi\endgroup%
\begin{picture}(466,460)(143,236)
\end{picture}%
}&$\quad\phantom{{}+1}\ell/2+q_3(\ell)$\\
\hline
\end{tabular}
\end{center}
\caption{The asymptotic behaviour of the magnitude of some subsets of Euclidean
space.  Here $q_1(t A),q_2(t),q_3(t)\to 0$ as $t\to \infty$, and $f$ is a `nearly constant'
function: $f(\ell)\simeq 1.205$ for all $\ell$.  Note that $\log_32$ is the Hausdorff dimension
of the Cantor set.}
\label{table:IntrinsicVolPolyExamples}
\end{table}
In this paper we calculate the magnitude for some subspaces
of
Euclidean space.  The asymptotic behaviour of these is summarized in
Table~\ref{table:IntrinsicVolPolyExamples}.  Looking at the data one might think that,
restricting to polyconvex sets, the magnitude splits as
  \[|A|=P(A)+q(A)\]
where $q(tA)\to 0$ as $t\to\infty$ and $P(A)=\intrinsic_0(A)+\frac12 \intrinsic_1(A)+\text{higher order
terms}$. In fact, plausibility arguments and some rough empirical results given in~\cite{Willerton:Heuristics}, together with some partial results in~\cite[Section~3.5]{Leinster:Magnitude}
 suggest
  \[P(A)=\sum_{i=0}^\infty\frac{\intrinsic_i(A)}{i!\,\omega_i}
  = \intrinsic_0(A)+\frac{\intrinsic_1(A)}{2}+\frac{\intrinsic_2(A)}{2\pi} +\dots,\]
where $\omega_i$ is the volume of the unit $i$-ball in $\R^i$.
It is perhaps worth noting here that the natural normalization of the
$i$-dimensional Hausdorff measure (see, for example,~\cite{Falconer:Book}) differs
from the usual normalization of the $i$-dimensional Lebesgue measure by a factor of $
\omega_i$.  So it might be useful to think in terms of the Hausdorff measures rather
than the $\intrinsic_i$, which are defined in terms of Lebesgue measures, especially since we
are considering the magnitude of fractal sets as well.

We are then led to speculate about the general behaviour of the magnitude.  It is possible to have varying degrees of optimism about this, so here are some possible conjectures.  The first one we suspect is too strong to be true.
\begin{StrongConjecture}
Let $\mathcal{C}$ be the class of all
compact subsets of $\R^m$, for any $m$.  Then there are unique functions $P,q\colon \mathcal{C}\to \R$  with the following properties:
\begin{enumerate}
\item $|A|=P(A)+q(A)$;
\item $P$ is a valuation, i.e.~satisfies the inclusion-exclusion principle;
\item if $A\in\mathcal{C}$ is polyconvex then $P(A) = \sum_{i \ge 0} \frac{1}{i!\,
\omega_i} \intrinsic_i(A)$;
\item for all $A\in\mathcal{C}$ we have $q(tA)\to 0$ as $t\to \infty$;
\item \label{Item:StrongConjConvex}
if $K\in \mathcal{C}$ is convex then $q(K)=0$.
\end{enumerate}
\end{StrongConjecture}
Perhaps the conjecture that we feel most plausible is essentially part~\ref{Item:StrongConjConvex} of the Strong Conjecture, namely that for any convex subset $K$ of Euclidean space, the magnitude is given by the valuation $P$, i.e.,
 \[|K|=\sum_{i\ge 0}\frac{ \intrinsic_i(K)}{i!\,\omega_i}.\]
We call this the Convex Magnitude Conjecture.  Further evidence for this kind of conjecture is presented in~\cite{Willerton:Heuristics, Meckes:PositiveDefinite, Leinster:Magnitude}.  A weaker conjecture is the following.
\begin{WeakConjecture}
There is a set ${\mathcal{D}}$ of
compact subsets of Euclidean spaces $\R^m$, which includes the set of finite sets of points, convex sets, circles and Cantor sets, and there is a unique function $P\colon {\mathcal{D}}\to \R$  with the following properties:
\begin{enumerate}
\item if $A \in {\mathcal{D}}$ then
$|tA| - P(tA) \to 0$ as $t\to\infty$;
\item $P$ is a valuation, i.e.~satisfies the inclusion-exclusion principle;
\item if $A\in {\mathcal{D}}$ is polyconvex then $P(A) = \sum_{i \ge 0} \frac{1}{i!\,
\omega_i} \intrinsic_i(A)$.
\end{enumerate}
\end{WeakConjecture}
A corollary of the Weak Asymptotic Conjecture would be the asymptotic inclusion-exclusion principle:
  \[|t(A\cup B)|-|tA|-|tB|+|t(A\cap B)|\to 0 \quad\text{as }t\to \infty\]
whenever $A,B,A\cup B, A\cap B\in {\mathcal{D}}$.

\subsection*{What is in this paper}
In the first section the notion of the magnitude of a finite metric space is motivated and defined. It is then proved that for any $n$-point metric space, provided that the points are sufficiently far
apart then the magnitude is well-defined and satisfies
\[\left|X\right|=n +q_1(X)\]
where $q_1(tX)\to 0$ as $t\to\infty$.  So asymptotically the magnitude is just the
cardinality of the underlying set or, equivalently, the Euler characteristic of the underlying topological space.  Note that for subsets of Euclidean space, the ``sufficiently far apart''
caveat is not necessary and the magnitude is always well-defined \cite{Leinster:Magnitude}.

In the second section we show that for a length $\ell$ closed line segment, $L_\ell$, we
can calculate
the magnitude using any sequence of finite subspaces of $\R$ converging in the
Hausdorff topology to the line segment and that the magnitude is
 \[\left|L_\ell\right|=\ell/2 +1.\]
This is \emph{precisely} half of the length plus the Euler characteristic, so there is no `asymptotic
correction'; this is perhaps related to the fact that a line segment is convex.

In the third section we consider $T_\ell$, the  `middle third' or `ternary' Cantor set of
length $\ell$.  We take an obvious sequence of approximations to this and calculate
the
magnitude of $T_\ell$ as the limit of the corresponding sequence of magnitudes.  We
find that this is of the form
\[\left|T_\ell\right|=p(\ell)+q_2(\ell)\]
where $q_2(\ell)\to 0 $ as $\ell\to\infty$ and $p$ satisfies the functional equation
$2p(\ell)=p(3\ell)$, which is an inclusion-exclusion result as the ternary Cantor set has
the self-similarity property $T_\ell\sqcup T_\ell=T_{3\ell}$.  This means that the
magnitude $|T_\ell|$ grows like $\log_32$, the Hausdorff dimension of the Cantor set.

In the fourth section the focus is on circles.  Taking $C_\ell$ to be the circle of
circumference $\ell$ embedded in Euclidean space in the canonical circular fashion,
and thus equipped with the subspace metric, we calculate the magnitudes of a sequence of symmetric approximation
to the circle, the limit of which
is the magnitude of the circle.  Using some non-trivial
classical asymptotic analysis we show that
  \[\left|C_\ell\right|= \ell/2 +q_3(\ell)\]
 where $q_3(\ell)\to 0$ as $\ell\to\infty$.  So again, asymptotically we get half of the length plus
the Euler characteristic, where here the Euler characteristic is zero.  We then go on to
use the same techniques to look at \emph{other} metrics on the circle.  The other
obvious metric on the circle is the `arc-length' metric; this can be viewed as an intrinsic
metric on the circle which does not depend on the embedding in Euclidean space, and
the magnitude of the circle with this metric is shown to have the same asymptotics.  In
fact there is a family of metrics interpolating and extrapolating the above two metrics; each of these metrics is obtained by embedding the circle in a constant curvature surface and using
the subspace metric.  We show that with these metrics we again have the same
asymptotic behaviour.

\subsection*{Acknowledgements}
We would like to thank Bruce Bartlett for his cunning use of \texttt{google} in the search for the proof of Theorem~\ref{Thm:EucCircleAsymptotics} and David Speyer for many comments, not least of which was his observation about homogeneous spaces (Theorem~\ref{Thm:SpeyersFormula}).    We would also like to thank the patrons of The $n$-Category Caf\'e for their contributions during the genesis of this paper (see~\cite{Leinster:MetricSpacesBlogPost}).  We thank Toby Bartels for some corrections to an early version, and Joe Fu for helpful comments.

Leinster is supported by an EPSRC Advanced Research Fellowship, and thanks the School of Mathematics and Statistics at the University of Sheffield for their generous hospitality.

\section{The magnitude of finite and positive definite metric spaces}

In the first part of this section
 a metric space is viewed as an enriched category so that the magnitude can be defined
analogously to the Euler characteristic of a finite category, introduced in~
\cite{Leinster:Cardinality} and~\cite{BergerLeinster:EulerCharDivergentSeries}.  This point of view acts purely as motivation, and an understanding of category theory is not necessary in order to read this paper.  After the definition, some basic properties are
given including a useful observation of David Speyer on the magnitude of homogeneous
metric spaces.  Next an overview of work of Mark Meckes on extending magnitude to a bigger class of metric spaces, namely positive definite spaces, is
given.  Finally in this section, it is shown that any finite metric space with sufficiently separated points has a well-defined magnitude, and that asymptotically the magnitude is simply the number of points.

\subsection{Category theoretic motivation}
 \label{Section:CategoryTheoreticMotivation}
 This part can be skipped if desired, but it describes where the seemingly \textit{ad hoc} definition of magnitude comes from.  Those desirous of some category theoretic background can refer to the books of Borceux and Mac Lane \cite{Borceux:Handbook1,Borceux:Handbook2,MacLane:Categories}.  
 
 Recall that a metric
space consists of a set $X$ together with a distance $d(x,x')\in [0,\infty)$ defined  for
each pair of elements $x,x'\in X$.  These are required to satisfy the triangle inequality
and the zero-self-distance axiom,
\[d(x,x')+d(x',x'')\ge d(x,x'')\quad\text{and}\quad 0\ge d(x,x)\]
(where the latter is usually written $0=d(x,x)$ but our reasons for writing it as we have
will become clear below), together with the symmetry and separation axioms,
\[d(x,x')=d(x',x)\quad\text{and}\quad 0\ge d(x,x') \Leftrightarrow x=x'.\]
Lawvere observed~\cite{Lawvere:MetricSpaces} that the first two of these conditions
are analogous to composition of morphisms and the inclusion of the identity in a
category:
\[\Hom(x,x')\times \Hom(x',x'')\to \Hom(x,x'') \quad\text{and}\quad \{\ast\}\to \Hom(x,x).
\]
Lawvere used this observation to interpret metric spaces as categories enriched over the
following monoidal category.  Take $[0,\infty]$ to be the monoidal category with the
non-negative real numbers together with infinity as its objects, and with precisely one
morphism from $a$ to $a'$ if $a\ge a'$ and no such morphisms otherwise; the
monoidal product is addition $+$ and the unit is $0$.  This category has categorical
products and coproducts: the categorical product of a set of objects is the supremum
of the set and the categorical coproduct is the infimum.

A category enriched over $[0,\infty]$ is then a set $X$ with a function $d\colon X\times
X\to [0,\infty]$  which satisfies the triangle inequality and zero-self-distance axiom.  (The
usual conditions these have to satisfy, namely associativity and the unit axiom, are
vacuously satisfied in this case.)  The notion of such an enriched category is then a
generalization of the usual notion of metric space in that the distance between two
points can be infinite, the distance does not have to be symmetric and the distance
between two different points can be zero; Lawvere argued convincingly that many
``metric spaces'' in nature are of this more general form.  In this paper the metric
spaces will be symmetric and satisfy the separation axiom, but the notion of magnitude
is defined in a similar way in the more general setting.

The definition of the Euler characteristic of a finite category can now be adapted to this
enriched category situation.  Recall briefly the definition of the Euler characteristic of a
finite category~\cite{Leinster:Cardinality}.  If $\mathcal{C}$ is a finite category then a \emph{weighting} on $
\mathcal{C}$ is a choice of real number $w_i\in\R$ for each object $i\in \text{Ob}
\mathcal{C}$ such that for every object $i$
 \[\sum_{j\in \text{Ob}\mathcal{C}}\Card\left(\Hom\left(i,j\right)\right)w_j=1\]
where $\Card$ is just the cardinality or number-of-elements function on finite sets.   If there exists a weighting on both $\mathcal{C}$ and $\mathcal{C}^{\text{op}}$ then the Euler characteristic is defined to be the sum of the
weights: $\chi(\mathcal{C}):=\sum_i w_i$.  The Euler characteristic is independent of the choice of weighting.

The key thing that needs adapting for the enriched case is the function~$\Card$.
We need a corresponding function $\widehat\Card\colon[0,\infty]\to\R$ on the objects
of the enriching category $[0,\infty]$.  The function $\Card$ satisfies $\Card(X\times Y)=
\Card(X)\times \Card(Y)$, so we will pick a function that satisfies $\widehat\Card(a+b)=
\widehat\Card(a)\widehat\Card(b)$.  The obvious choice for such a function is $
\widehat\Card(a)=\alpha^a$ for some non-negative number $\alpha$ and this is what we
shall use, taking $\alpha=e^{-1}$.
There is no obvious reason why other functions cannot be used;
it is just that our choice gives interesting results.  We can now give the definition.

\subsection{Definition and basic properties for finite metric spaces}
\label{Section:DefinitionOfMagnitude}
The magnitude of a finite metric space is defined in the following way.

Given a finite metric space $X$, a \emph{weighting} is a choice of real number $w_x\in\R$ for
each point $x\in X$ such that for each $x\in X$ we have
 \[\sum_{x'\in X}e^{-d(x,x')}w_{x'}=1.\]
If such a weighting exists then the \emph{magnitude} $|X|$ of the metric space $X$ is defined
to be the sum of the weights:
 \[|X|:=\sum_x w_x.\]

There are several things to note from this definition. Firstly, there are finite metric spaces on which no weighting exists (see \cite{Leinster:MetricSpacesBlogPost, Leinster:Magnitude})
 and the magnitude is then undefined.  Secondly, the weightings are not necessarily positive.  Thirdly, even if a weighting does exist then it might not be unique, however the magnitude is independent of the choice of weighting, as if $w$ and $\bar w$ are two weightings then
\[
\sum_x w_x
=
\sum_x w_x \sum_{x'} e^{-d(x, x')} \bar w_{x'}
=
\sum_{x'}\sum_{x} w_x e^{-d(x', x)}  \bar w_{x'}
=
\sum_{x'} \bar w_{x'},
\]
where we have used the symmetry of the metric.

In many cases, such as for positive definite spaces (see the next subsection),
the following gives a method for calculating the magnitude.  Define the
matrix $Z$ of exponentiated distances, indexed by the points of $X$, as follows:
$Z_{x,x'}:=e^{-d(x,x')}$.  \emph{If} $Z$ is invertible then $X$ has a unique weighting $w$, and  $w_{x}$ is the sum of entries in the $x$th
row in the inverse $Z^{-1}$:
\[w_x:=\sum_{x'}(Z^{-1})_{x,x'}\]
and thus the sum of all entries of $Z^{-1}$ gives the magnitude:
 \[|X|=\sum_{x,x'}(Z^{-1})_{x,x'}.\]
This method can be applied to `large' metric spaces in a sense made precise in Theorem~\ref{Theorem:SufficientlySeparated} and, in fact, to `most' metric spaces in a sense made precise in Proposition~2.2.6(i) of~\cite{Leinster:Magnitude}.

The following is a useful observation of David Speyer (stated in~\cite{Speyer:ncat}).
\begin{thm}[The Speyer Formula]\label{Thm:SpeyersFormula}
If a finite metric space $X$ carries a transitive action by a group of isometries then there
is a weighting in which the points all have the same weight and this is given on every
point by
  \[\frac{1}{\sum_{x'\in X}e^{-d(x,x')}}\]
for any $x\in X$.  Thus the magnitude is defined and is given by
\[|X|=\frac{\Card{X}}{\sum_{x'\in X}e^{-d(x,x')}}\]
for any $x\in X$.
\end{thm}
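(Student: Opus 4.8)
The plan is to verify directly that the \emph{constant} function $w_x := 1/S$, where
\[
S := \sum_{x'\in X} e^{-d(x,x')},
\]
is a weighting on $X$ in the sense of the definition above; once this is known, the stated value of $|X|$ drops out by summing the weights over the $n$ points, and the already-established fact that the magnitude does not depend on the choice of weighting guarantees that this indeed computes $|X|$.

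The first step is to check that $S$ is independent of the choice of $x$, which is precisely where transitivity enters. Given $x,y\in X$, choose an isometry $g$ in the group with $g\cdot x = y$. Since $g$ acts on $X$ as a distance-preserving bijection, reindexing the sum along $x'\mapsto g\cdot x'$ gives
\[
\sum_{x'\in X} e^{-d(y,x')} = \sum_{x'\in X} e^{-d(g\cdot x,\,g\cdot x')} = \sum_{x'\in X} e^{-d(x,x')},
\]
so $S$ is well-defined; note also $S \ge e^{-d(x,x)} = 1 > 0$, so $1/S$ makes sense.

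The second step is to substitute $w_{x'} = 1/S$ into the defining equation of a weighting: for every $x\in X$,
\[
\sum_{x'\in X} e^{-d(x,x')}\,w_{x'} = \frac{1}{S}\sum_{x'\in X} e^{-d(x,x')} = \frac{S}{S} = 1,
\]
where the inner sum is identified with $S$ using the first step. Hence $w$ is a weighting, the magnitude is defined, and
\[
|X| = \sum_{x\in X} w_x = \frac{n}{S} = \frac{n}{\sum_{x'\in X} e^{-d(x,x')}}
\]
for any $x\in X$, as claimed.

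There is no genuinely hard step here; the one point requiring care is the transitivity argument, since if $S$ varied with $x$ the constant function would fail to satisfy all $n$ weighting equations simultaneously. The remainder is bookkeeping, together with the appeal to the weighting-independence of the magnitude recorded just before the statement.
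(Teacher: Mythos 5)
Your proposal is correct and follows essentially the same route as the paper: use transitivity to see that $\sum_{x'}e^{-d(x,x')}$ is independent of $x$ (the paper phrases this as the multiset of distances from any point being the same, while you make the reindexing along an isometry explicit), then verify the weighting equation for the constant weight $1/S$ and sum. The extra remarks that $S\ge 1>0$ and that the magnitude is independent of the choice of weighting are harmless refinements of the paper's argument.
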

\begin{proof}
The observation is simply that as there is a transitive group action, for each $x\in X$ the
set-with-multiplicity of distances $\{d(x,x')\,| \,x'\in X\}$ is the same, so $\sum_{x'}e^{-d(x,x')}$ is also
the same for each $x$.  Thus for every $x$
  \[{\sum_{x'}\left(\frac1{\sum_{x''}e^{-d(x'',x')}}\right)e^{-d(x,x')}}
    =\frac{\sum_{x'}e^{-d(x,x')}}{\sum_{x''}e^{-d(x'',x')}}
    =1\]
and the weighting condition is satisfied.
\end{proof}
We can also define the \emph{magnitude function}.  For a finite metric space $X$ and a
positive number $t\in \R_{>0}$, let $tX$ be the metric space obtained by scaling all of
the distances by $t$, so that it has the same underlying set of points but the metric is given by $d_{tX}(x,x'):=td_{X}(x,x')$.  Now define the magnitude function, thought of as a function of $t$, to be
$|tX|$.  This is not necessarily defined for all $t$, but it \emph{is} defined for all $t$ sufficiently large, see Theorem~\ref{Theorem:SufficientlySeparated} below.
In this paper we will be interested in the asymptotics of this function for large $t$.


\subsection{The magnitude of a compact metric space}
\label{Section:MeckesResults}
In this subsection
we will just mention some work of Mark Meckes \cite{Meckes:PositiveDefinite} that followed on from an earlier version of this paper.  It transpires that a particularly nice class of spaces is that of positive definite spaces: a metric space $X$ is said to be 
\emph{positive definite} if for every finite subspace $W$, the matrix $Z\in \R^{W\times W}$ given by $Z(x,x'):= e^{-d(x,x')}$ is positive definite.  Finite positive definite spaces always have well-defined, non-negative magnitude.  Examples of positive definite spaces include subspaces of Euclidean space and round spheres; all of the infinite spaces considered in this  are of this nice form.

For a compact positive definite space $X$ the magnitude can be defined in the following two equivalent ways.  Firstly, we can define it as
  \[
  \left|X\right| :=\sup\left\{\left|W\right| \,\colon\, W\subset X \text{ with $W$ finite}\right\};
  \]
for non-positive definite finite metric spaces this will not necessarily agree with the usual definition of magnitude.  Secondly, by \cite[Corollary~2.7]{Meckes:PositiveDefinite}, if $(X_k)_{k\in \N}$ is a sequence of finite  subsets of $X$ with $X_k\to X$ in the Hausdorff metric then $\left|X_k\right|\to \left|X\right|$.  So this could equally be taken as a definition of magnitude.  (The definition of the Hausdorff metric is recalled in Section~\ref{Section:StraightLine}.)

Let $X$ be a compact positive definite metric space.  A \emph{weight measure} on $X$ is a finite signed Borel measure $\mu$ on $X$ such  that for all $x \in X$,
    \[
    \int_X e^{-d(x, x')} \, d\mu(x') = 1.
    \]
For any weight measure $\mu$ on $X$, we have $\mu(X) = |X|$ \cite[Theorem~2.3]{Meckes:PositiveDefinite}.  This gives a third possible definition of
magnitude, but one that only applies to spaces on which a weight
measure exists. 

Let $X$ be a compact metric space such that $tX$ is positive definite for all $t \gg 0$.  The \emph{magnitude dimension}~\cite[Definition~3.4.5]{Leinster:Magnitude}
of $X$ can be defined as the growth of the function $t \mapsto |tX|$:
\[
\dim(X)
=
\inf \Bigl\{
  \nu \in \mathbb{R} :
  \frac{|tX|}{t^\nu} \text{ is bounded for } t \gg 0
\Bigr\}.
\]
For example, it can be shown that if $A$ is a compact subset of $\R^n$ with the Euclidean metric then $\dim(A) \leq n$, with equality if $A$ has nonzero Lebesgue measure \cite[Theorem~3.5.8]{Leinster:Magnitude}.


\subsection{Asymptotics for finite metric spaces}
In this subsection it is shown that every finite metric space with sufficiently well separated points has a well-defined
magnitude.  Furthermore,  asymptotically it is just the number of points; if $X$ is a metric space with $n$ points, then in the
notation of the introduction, $\left | X\right|= n+q_1(X)$ where $q_1(tX)\to 0$ as $t\to\infty$.

Note that as subsets of Euclidean space are positive definite and finite positive definite spaces have a well-defined magnitude, any finite metric space
without a well-defined magnitude is not isometrically embeddable in Euclidean space; however the converse does not hold, as there are plenty of non-Euclidean metric spaces with a well-defined magnitude.

Consider first the two-point space $X_d$ where the two points are
a distance $d$ apart.
 \[\blob \stackrel {d}\longleftrightarrow \blob \]
 The magnitude $|X_d|$ is easy to calculate and there are many ways to do that.  As the
space is symmetric, Speyer's formula, Theorem~\ref{Thm:SpeyersFormula}, can be
applied to show that the magnitude is $2/(1+e^{-d})$, which clearly tends to $2$ as $d$
tends to infinity.  This can be rewritten as
   \[\left|X_d\right|=2 - \frac{2}{1+e^d}\]
which is in the form given above: it is the magnitude of the set of points plus
some term which is asymptotically zero as $d\to\infty$.

The general case of $n$ points requires a little analysis.  The first thing to do is to show
that a finite metric space has a well-defined magnitude provided that the points are
``sufficiently separated''.
\begin{thm}
\label{Theorem:SufficientlySeparated}
 If $X$ is a finite metric space with $n$ points such that the distance
between each pair of distinct points is greater than $\ln(n-1)$ then $X$ has a well-defined
magnitude.
\end{thm}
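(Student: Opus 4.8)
The plan is to show that the matrix $Z = (Z_{x,x'})$ of exponentiated distances, $Z_{x,x'} = e^{-d(x,x')}$, is invertible; by the discussion in Section~\ref{Section:DefinitionOfMagnitude} this immediately gives $X$ a (unique) weighting, and hence a well-defined magnitude. First I would dispose of the trivial case $n = 1$, where the hypothesis is vacuous and the one-point space has weighting $w = 1$ and magnitude $1$; so assume $n \geq 2$.

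The key observation is that $Z$ is \emph{strictly diagonally dominant}. Indeed, $d(x,x) = 0$ forces every diagonal entry $Z_{x,x}$ to equal $1$, while the hypothesis $d(x,x') > \ln(n-1)$ for distinct $x, x'$ gives $Z_{x,x'} = e^{-d(x,x')} < e^{-\ln(n-1)} = \tfrac{1}{n-1}$. Hence in the row indexed by any $x \in X$, the sum of the absolute values of the off-diagonal entries satisfies
\[
\sum_{x' \neq x} e^{-d(x,x')} < (n-1)\cdot \frac{1}{n-1} = 1 = |Z_{x,x}|.
\]
Note it is precisely to make this inequality \emph{strict} that the hypothesis is phrased as $d(x,x') > \ln(n-1)$ rather than $\geq$.

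Now a strictly diagonally dominant matrix is invertible: by Gershgorin's circle theorem every eigenvalue of $Z$ lies in some disc centred at a diagonal entry $1$ with radius strictly less than $1$, so $0$ is not an eigenvalue (this is the Levy--Desplanques theorem). Therefore $Z$ is invertible, and, as recorded before the statement of Theorem~\ref{Thm:SpeyersFormula}, this means $X$ has the unique weighting $w_x = \sum_{x'}(Z^{-1})_{x,x'}$ and a well-defined magnitude $|X| = \sum_{x,x'}(Z^{-1})_{x,x'}$. There is no real obstacle in this argument; the only points requiring a moment's care are the boundary case $n = 1$ and the strictness of the row-sum bound.
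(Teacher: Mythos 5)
Your proof is correct, and it reaches the conclusion by a genuinely different (though closely related) route from the paper's. You observe that the hypothesis makes $Z$ strictly diagonally dominant and invoke Gershgorin/Levy--Desplanques to conclude invertibility. The paper instead proves directly that $Z$ is positive definite, via the elementary identity
\[
\sum_i x_i^2 - \frac{1}{n-1}\sum_{i\ne j}|x_i|\,|x_j| \;=\; \frac{1}{2(n-1)}\sum_{i\ne j}\bigl(|x_i|-|x_j|\bigr)^2 \;\ge\; 0,
\]
together with the strictness of the bound $Z_{ij}<1/(n-1)$ to rule out a nonzero kernel. Your argument is shorter and leans on a standard theorem; the paper's is self-contained and yields the stronger conclusion that $Z$ is positive definite, which in particular shows the magnitude $|X| = u^{\mathrm t} Z^{-1} u$ (with $u$ the all-ones vector) is strictly positive. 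In fact your approach recovers positive definiteness too with one more sentence: since $Z$ is symmetric its eigenvalues are real, and Gershgorin places them all in $(0,2)$, so they are positive. One trivial loose end: the paper also notes the $n=0$ case (empty space, magnitude $0$), which you omit, but this costs nothing.
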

Note that whilst the bound only makes sense for $n\ge 2$, the condition is satisfied vacuously when $n$ is $0$ or $1$.

It was pointed out by the referee that this result is a consequence of the Levy-Desplanques Theorem~\cite[Theorem~6.1.10]{HornJohnson:MatrixAnalysis}, and that indeed the proof below can be generalized to give a proof of the Levy-Desplanques Theorem.
\begin{proof}
Firstly, $0$- and $1$-point spaces have magnitude $0$ and $1$ respectively, so we may assume that $n\ge 2$.
We wish to show that if $d(x,x')\ge \ln(n-1)$ for all $x\ne x'$ then the exponentiated distance
matrix $Z$ is invertible, so it suffices to show that if $Z$ is an $n \times n$ real matrix with $Z_{ii} =
1$ for all $i$ and $0 \leq Z_{ij} < 1/(n - 1)$ for all $i \neq j$ then $Z$ is
invertible.  In fact we show that $Z$ is positive definite:
${x}^\text{t} Z{x} \geq 0$ for all ${x} \in
\R^n$, with equality if and only if ${x} = 0$.  Indeed,
\begin{align*}
{x}^\text{t} Z {x}      &
=
\sum_i x_i^2 + \sum_{i \neq j} Z_{ij} x_i x_j
       \, \geq\,
\sum_i x_i^2 - \frac{1}{n - 1} \sum_{i \neq j} |x_i| |x_j|
        \\
        &=
\frac{1}{2(n - 1)} \sum_{i \neq j} (|x_i| - |x_j|)^2
        \,\geq\,
0.
\end{align*}
%
If ${x}^\text{t} Z{x} = 0$ then all of the inequalities must be equalities and so 
$|x_1| = \dots = |x_n| = \alpha$, say.  However, as there is the  
\emph{strict}
inequality $Z_{ij} < 1/(n - 1)$ it must be that $\alpha = 0$.  Thus $Z$ is positive definite and hence invertible, as required.
%
%
\end{proof}
By the above theorem, if $X$ is a finite metric space, then for $t\in \R$ sufficiently large, the scaled-up
version of $X$ has a well-defined magnitude $|tX|$, so it makes sense to talk of the
asymptotic behaviour of $|tX|$ even if the magnitude of $X$ itself is not defined.  The
fundamental result can now be stated.
\begin{thm}
\label{Theorem:AsymptoticFinitePoints}
If $X$ is a finite metric space with $n$ points then $|tX|\to n$ as $t\to \infty$.
\end{thm}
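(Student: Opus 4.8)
The plan is to combine the matrix formula for magnitude with the continuity of matrix inversion. The cases $n \le 1$ are trivial, since then $|tX| = n$ for every $t$, so assume $n \ge 2$ and set $\delta := \min_{x \ne x'} d_X(x,x') > 0$. Write $Z^{(t)}$ for the exponentiated-distance matrix of $tX$, so that $Z^{(t)}_{x,x'} = e^{-t\, d_X(x,x')}$. First I would note that as soon as $t > \ln(n-1)/\delta$ the space $tX$ satisfies the hypothesis of Theorem~\ref{Theorem:SufficientlySeparated}, so $Z^{(t)}$ is invertible; hence $tX$ has a unique weighting and $|tX| = \sum_{x,x'} (Z^{(t)})^{-1}_{x,x'}$, the sum of all entries of $(Z^{(t)})^{-1}$.

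Next I would pass to the limit entrywise. The diagonal entries satisfy $Z^{(t)}_{x,x} = e^0 = 1$ for all $t$, while for $x \ne x'$ we have $Z^{(t)}_{x,x'} = e^{-t\, d_X(x,x')} \to 0$ because $d_X(x,x') \ge \delta > 0$. Thus $Z^{(t)} \to I$, the $n \times n$ identity matrix, entrywise, and hence in any norm, the space of $n \times n$ matrices being finite-dimensional. The theorem then follows from the fact that inversion $A \mapsto A^{-1}$ is continuous on the (open) set of invertible matrices, since the entries of $A^{-1}$ are rational functions of those of $A$ with denominator $\det A \ne 0$. As $I$ is invertible and $Z^{(t)} \to I$, we get $(Z^{(t)})^{-1} \to I^{-1} = I$ entrywise, and therefore
\[
|tX| \;=\; \sum_{x,x'} (Z^{(t)})^{-1}_{x,x'} \;\longrightarrow\; \sum_{x,x'} I_{x,x'} \;=\; n
\]
as $t \to \infty$.

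Should a fully self-contained argument be preferred in place of the appeal to continuity of inversion, one can instead write $Z^{(t)} = I + E^{(t)}$ with $\|E^{(t)}\|_\infty = \max_x \sum_{x' \ne x} e^{-t\, d_X(x,x')} \to 0$ (here $\|\cdot\|_\infty$ is the max-row-sum norm), use the Neumann series $(Z^{(t)})^{-1} = \sum_{k \ge 0} (-E^{(t)})^k$ once $\|E^{(t)}\|_\infty < 1$, and observe that the sum of all entries of the tail $\sum_{k \ge 1}(-E^{(t)})^k$ is at most $n\,\|E^{(t)}\|_\infty / (1 - \|E^{(t)}\|_\infty) \to 0$. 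I do not expect any real obstacle here: the one point deserving a word of care is that the matrix-inverse expression for the magnitude is legitimate only when $Z^{(t)}$ is invertible, which is precisely what Theorem~\ref{Theorem:SufficientlySeparated} secures for large $t$; everything else is routine real analysis.
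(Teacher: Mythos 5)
Your proposal is correct and follows essentially the same route as the paper: both arguments observe that $Z_{tX}\to I$ entrywise and then invoke continuity of (the sum of the entries of) the matrix inverse on $\GL_n(\R)$, the only cosmetic difference being that you secure invertibility for large $t$ via Theorem~\ref{Theorem:SufficientlySeparated} while the paper uses openness of $\GL_n(\R)$ directly. The Neumann-series variant you sketch is a sound self-contained alternative but is not needed.
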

\begin{proof}
Let $\mathcal{M}_n$ be the space of real $n \times
n$ matrices.  On the subspace $\GL_n(\R)$ of \emph{invertible} matrices, define the
real-valued function $f$ by
\begin{align*}
f(Z) &{:=}
(\text{sum of the entries of } Z^{-1})\\
&=
(\text{sum of the entries of } \adj(Z))/\det(Z).
\end{align*}
where $\adj(Z)$ is the adjugate
matrix of $Z$.
Then $f(Z)$ is a rational function of the entries of $Z$, so $f$ is continuous.

Writing $Z_Y$ for the exponentiated distance matrix of a finite metric space $Y$, if $Z_{Y}$ is invertible then $\left|Y\right|=f(Z_{Y})$.  We have $\lim_{t \to
\infty} (Z_{tX}) = I$.  As $t\mapsto Z_{tX}$ is continuous, $I\in\GL_{n}(\R)$ and $\GL_n(\R)$ is
open in $\mathcal{M}_n$, then for $t\gg 0$ we have $Z_{tX} \in \GL_n(\R)$ and
\[
\lim_{t \to \infty} \left|tX\right|
=
\lim_{t \to \infty} f\left(Z_{tX}\right)
=
f\left(\lim_{t \to \infty} Z_{tX}\right)
=
f(I)
=
n.\qedhere
\]
%
\end{proof}
In other words, in the language of the introduction, if $X$ is finite metric space with $n$ points which has a magnitude then we may define $q_1(X)$ by
\[|X|=n+q_1(X),\]
and then, for $X$ any finite metric space,  $q_1(tX)\to 0$ as $t\to \infty$.


\section{The magnitude of a straight line segment}
\label{Section:StraightLine}
In this section we approximate $L_\ell$, a closed straight line segment of length $\ell$, by a sequence of finite metric spaces consisting of points lying in a line.  We show that
no matter which approximating sequence of this type is chosen, the sequence of magnitudes always converges to the same value, namely
$\left|L_\ell\right|$.  In fact
 $|L_\ell|=\ell/2+1$, which is \emph{exactly} the conjectured valuation; for this space there is no need to make an asymptotic statement.

Note that in~\cite{Willerton:Homogeneous}
the magnitude of a straight line segment is calculated using a weight measure.  This takes advantage of the fact that $L_\ell$ has a weight measure; we do not expect  $L_\ell^m$,  the $m$-dimensional cube of length $\ell$, to have a weight measure for any $m>1$.  The method given here is more general and demonstrates some elementary aspects of the theory.

Start by considering finite metric spaces consisting of points arranged in a line; we call these \emph{linear metric spaces}.  For
an $(n-1)$-tuple $\mathbf d =(d_1,\ldots,d_{n-1})$ of strictly positive real numbers,
define $X_{\mathbf d}$ to consist of $n$ points with the distance between
consecutive points being given by the $d_i$s, as in the following picture.
 \[X_\mathbf{d}:\qquad\blob \stackrel {d_1}\longleftrightarrow \blob \stackrel {d_2}\longleftrightarrow \blob
\dots\blob \stackrel {d_{n-1}}\longleftrightarrow \blob\]
This metric space has a weighting on it with the property that the weight of a point only depends on the distance to its nearest neighbours, giving rise to a simple expression for the magnitude.
\begin{thm}
\label{Thm:LinearMagnitude}
Suppose $X_{\mathbf{d}}$ is a linear metric space as above.  Then there is a weighting on it such that
weight of the $i$th point is
\[\frac 12 \left( \tanh \left(d_{i-1}/2\right) + \tanh \left(d_{i}/2\right)\right),\]
where, for convenience, we write $d_0=d_n=\infty$.
Thus the magnitude of the linear space $X_\mathbf{d}$ is given by:
  \[|X_\mathbf{d}|=1+\sum_{i=1}^{n-1}\tanh(d_i/2).
 \]
\end{thm}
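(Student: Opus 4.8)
The plan is to verify directly that the proposed numbers $w_i := \tfrac12(\tanh(d_{i-1}/2) + \tanh(d_i/2))$ constitute a weighting on $X_{\mathbf d}$, i.e.\ that $\sum_{j=1}^n e^{-d(x_i,x_j)}w_j = 1$ for every $i$; the formula for $|X_{\mathbf d}|$ then follows immediately by summing the $w_i$ and telescoping. Two preliminaries make the computation transparent. First, a linear metric space may be realized with its points at positions $s_1 < s_2 < \dots < s_n$ on the real line, $s_{i+1}-s_i = d_i$, so that $d(x_i,x_j) = |s_i - s_j|$; the only feature of the metric used below is the resulting additivity $d(x_i,x_j) = d(x_i,x_k)+d(x_k,x_j)$ whenever $i\le k\le j$. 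Second, the elementary identity $\tanh(d/2) = (1-e^{-d})/(1+e^{-d})$ holds for $d\in(0,\infty]$ under the convention $\tanh(\infty/2)=1$, which is exactly what makes the boundary choices $d_0 = d_n = \infty$ consistent with the stated values of $w_1$ and $w_n$.

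The core step is to split the weighted row sum at $j=i$. Put $L_i := \sum_{j\le i} e^{-(s_i-s_j)}w_j$ and $R_i := \sum_{j>i} e^{-(s_j-s_i)}w_j$, so that the weighting equation at $x_i$ is precisely $L_i + R_i = 1$. Additivity of distances along the line yields the recursions $L_i = w_i + e^{-d_{i-1}}L_{i-1}$ with $L_1 = w_1$, and $R_i = e^{-d_i}(w_{i+1} + R_{i+1})$ with $R_n = 0$. I claim $L_i = \tfrac12(1+\tanh(d_i/2))$ and $R_i = \tfrac12(1-\tanh(d_i/2))$ for all $i$. Each claim is an immediate induction --- upward in $i$ for $L$, downward for $R$ --- whose inductive step reduces, after substituting the formula for $w_i$ and clearing the factor $\tfrac12$, to the single identity $\tanh(d/2)(1+e^{-d}) = 1-e^{-d}$ recorded above; the base cases $L_1 = w_1 = \tfrac12(1+\tanh(d_1/2))$ and $R_n = 0 = \tfrac12(1-\tanh(d_n/2))$ hold by the $d_0 = d_n = \infty$ convention. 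Adding the two claims gives $L_i + R_i = 1$, so the $w_i$ form a weighting, which also proves the stated formula for the $i$th weight.

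Finally, since the magnitude equals the sum of the weights of any weighting (as noted after the definition), $|X_{\mathbf d}| = \sum_{i=1}^n w_i = \tfrac12\sum_{i=1}^n \tanh(d_{i-1}/2) + \tfrac12\sum_{i=1}^n \tanh(d_i/2)$; reindexing the first sum and using $\tanh(d_0/2) = \tanh(d_n/2) = 1$, each interior term $\tanh(d_i/2)$ with $1\le i\le n-1$ occurs with total coefficient $1$ while the two boundary terms together contribute $1$, giving $|X_{\mathbf d}| = 1 + \sum_{i=1}^{n-1}\tanh(d_i/2)$.

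There is no serious obstacle: the whole argument rests on the one $\tanh$ identity feeding the two inductions, and the only point demanding care is the bookkeeping of the conventions $d_0 = d_n = \infty$ in the base cases. (One could instead exhibit the tridiagonal inverse of the exponentiated-distance matrix $Z$ and read the weights off its row sums, but the direct verification above is shorter and sidesteps any discussion of the invertibility of $Z$.)
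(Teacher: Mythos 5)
Your proof is correct, and it takes a genuinely different route from the paper's. The paper writes out the exponentiated distance matrix $Z$ (whose $(i,j)$ entry is $\prod_{s=i}^{j-1} e^{-d_s}$), exhibits its tridiagonal inverse explicitly, and reads off the weights as row sums of $Z^{-1}$; the verification that the displayed matrix really is the inverse is left to the reader. You instead verify the weighting equations directly: splitting the row sum at $j=i$ into $L_i$ and $R_i$, deriving the two recursions from additivity of distances along the line, and running an upward and a downward induction whose steps each collapse to the identity $\tanh(d/2)=(1-e^{-d})/(1+e^{-d})$. I checked both inductive steps and the base cases under the convention $d_0=d_n=\infty$, and the telescoping of $\sum_i w_i$ to $1+\sum_{i=1}^{n-1}\tanh(d_i/2)$; all are sound, and your appeal to weighting-independence of the magnitude (established just after the definition) legitimately sidesteps any invertibility discussion. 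What each approach buys: yours is shorter, fully self-contained, and never mentions $Z^{-1}$; the paper's yields strictly more information, namely the entire inverse matrix --- hence the invertibility of $Z$ and the uniqueness of the weighting --- which is reused later (e.g.\ the tridiagonal structure is what makes the Cantor-set recursion in the next section transparent). Your parenthetical remark acknowledging this trade-off is accurate.
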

\begin{proof}
The distance  $d_{ij}$ between the $i$th and $j$ points, for $i<j$, is given by $
\sum_{s=i}^{j-1} d_s$, so $e^{-d_{ij}}=\prod_s e^{-d_s}$.  Thus writing
$a_i:=e^{-d_i}$ the exponentiated distance matrix of $X_\mathbf{d}$ is given by the following matrix, which we write out for the case $n=5$, as the general pattern should be clear from this:
\[\begin{pmatrix} 1&a_1&a_1a_2&a_1a_2a_3&a_1a_2a_3a_4\\
a_1&1&a_2&a_2a_3&a_2a_3a_4\\
a_1a_2&a_2&1&a_3&a_3a_4\\
a_1a_2a_3&a_2a_3&a_3&1&a_4\\
a_1a_2a_3a_4&a_2a_3a_4&a_3a_4&a_4&1\\
\end{pmatrix}.
\]
It is easy to verify that the inverse of such a matrix is
\[\begin{pmatrix}
\frac1{1-a_1^2}& \frac{-a_1}{1-a_1^2}&0&0&0\\
\frac{-a_1}{1-a_1^2}&\frac{1+a^2_1}{2(1-a_1^2)}+\frac{1+a^2_2}{2(1-a_2^2)}&
\frac{-a_2}{1-a_2^2}&0&0\\
0&\frac{-a_2}{1-a_2^2}&\frac{1+a^2_2}{2(1-a_2^2)}+\frac{1+a^2_3}{2(1-a_3^2)}
&\frac{-a_3}{1-a_3^2}&0\\
0&0&\frac{-a_3}{1-a_3^2}&\frac{1+a^2_3}{2(1-a_3^2)}+\frac{1+a^2_4}{2(1-a_4^2)}
&\frac{-a_4}{1-a_4^2}\\
0&0&0&\frac{-a_4}{1-a_4^2}&\frac{1}{1-a_4^2}
\end{pmatrix}.\]
The weight of the $i$th point is the sum of the entries in the $i$th row, so for $i
\ne 1, n$ the weight is
\[\frac12 \left(\frac{1-a_{i-1}}{1+a_{i-1}}+\frac{1-a_{i}}{1+a_{i}}\right)\]
and as
\[\frac{1-a_{i}}{1+a_{i}}=\frac{a_i^{-1/2}-a^{1/2}_{i}}{a_i^{-1/2}+a^{1/2}_{i}}
=\tanh (d_i/2)\]
it follows that the weight is just $\frac12 \left(\tanh (d_{i-1}/2)+\tanh (d_{i}/2)\right)$.  At the
endpoints the weight is simply
 \[\frac1{1+a_m}=\frac12\left(\frac{1-a_m}{1+a_m}+1\right) = \frac12\left(\tanh (d_m/2)
+1\right)\]
 for $m=1,n-1$ respectively.
\end{proof}

We wish to approximate a straight line interval by a sequence of such finite linear spaces, where `approximate' means in the sense of the \emph{Hausdorff metric}.  Recall that the Hausdorff metric can be defined as follows (see, for example,~\cite{KlainRota:Book}).  If $X'$ is a compact subset of a metric space $X$ then for $\epsilon\ge0$, the \emph{$\epsilon$-expansion} $E(X',\epsilon)$ of $X'$ consists of all the points in $X$ of distance at most $\epsilon$ from a point in $X'$.  The Hausdorff distance between two compact subsets $X',X''\subseteq X$ is defined to be the least $\epsilon\ge0$ such that each subset is contained within the $\epsilon$-expansion of the other:
 \[d(X',X''):=\inf \{\epsilon\,|\, X'\subseteq E(X'',\epsilon) \text{ and } X''\subseteq E(X',\epsilon)\}.\]
 The following is straightforward from the definitions.
\begin{lemma}  Let $(X^k)_{k=1}^\infty$ be a sequence of finite subspaces of the length $\ell$ line segment $L_\ell$ so that  for each $k$ we have $X^k\cong X_{\mathbf{d}^k}$ for some tuple  $\mathbf {d}^k =(d_1,\ldots,d_{n_k-1})$.
Then $X^k\to L_{\ell}$ as $k\to \infty$ if and only if  $\sum_i d^k_i\to \ell $ and  $\max_i(d_i^k)
\to 0$ as $k\to \infty$.
\end{lemma}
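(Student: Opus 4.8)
The plan is to work with the concrete model $L_\ell=[0,\ell]\subseteq\R$, on which the subspace metric is simply $d(x,y)=|x-y|$, so that every finite subspace $X$ is a finite set of reals $x_1<x_2<\dots<x_n$ lying in $[0,\ell]$. For the first sentence of the lemma I would set $d_i:=x_{i+1}-x_i>0$; since $d(x_i,x_j)=d_i+d_{i+1}+\dots+d_{j-1}$ for $i<j$, the order-respecting bijection $X\to X_{\mathbf d}$ with $\mathbf d=(d_1,\dots,d_{n-1})$ is an isometry, which is the required presentation. For uniqueness up to reversal, the key point is that in any $X_{\mathbf d}$ the diameter $\sum_i d_i$ is attained \emph{only} by the pair of extreme points $\{x_1,x_n\}$, since omitting any $d_s$ strictly decreases the sum; hence an isometry $X_{\mathbf d}\isomto X_{\mathbf d'}$ carries this unique diametral pair to the unique diametral pair of $X_{\mathbf d'}$. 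Post-composing with the (isometric) order reversal of $X_{\mathbf d'}$ if necessary, we may assume $x_1\mapsto x'_1$; then since $d(x_1,x_i)=d_1+\dots+d_{i-1}$ is strictly increasing in $i$, and similarly for $X_{\mathbf d'}$, the isometry must be the order isomorphism $x_i\mapsto x'_i$, so $d_i=d(x_i,x_{i+1})=d(x'_i,x'_{i+1})=d'_i$ for all $i$.

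For the convergence statement I would first compute the Hausdorff distance $d(X,L_\ell)$ explicitly for a finite $X=\{x_1<\dots<x_n\}\subseteq[0,\ell]$, assuming $n\ge2$ (for $\ell>0$ a space with at most one point cannot Hausdorff-converge to a nondegenerate interval, and $\ell=0$ is trivial). One always has $X\subseteq E(L_\ell,\epsilon)$, while $L_\ell\subseteq E(X,\epsilon)$ holds precisely when every point of $[0,\ell]$ lies within $\epsilon$ of $X$; the function $y\mapsto\min_i|y-x_i|$ is piecewise linear and attains its local maxima at $0$, at $\ell$, and at each midpoint $(x_i+x_{i+1})/2$, of values $x_1$, $\ell-x_n$ and $d_i/2$ respectively. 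Therefore
\[
d(X,L_\ell)=\max\Bigl(x_1,\ \ell-x_n,\ \tfrac12\max_i d_i\Bigr),
\]
and I would record alongside this the identity $\sum_i d_i=x_n-x_1$. Consequently $X^k\to L_\ell$ is equivalent to the conjunction of the three conditions $x_1^k\to0$, $\ell-x_n^k\to0$ and $\max_i d_i^k\to0$.

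The final step is to match these three conditions with the two in the statement. If all three hold then $\sum_i d_i^k=x_n^k-x_1^k\to\ell-0=\ell$ and $\max_i d_i^k\to0$, giving one direction. Conversely, given $\sum_i d_i^k=x_n^k-x_1^k\to\ell$ and $\max_i d_i^k\to0$, the bounds $0\le x_1^k$ and $x_n^k\le\ell$ yield $0\le x_1^k\le\ell-(x_n^k-x_1^k)\to0$ and $0\le\ell-x_n^k\le\ell-(x_n^k-x_1^k)\to0$, so all three conditions follow by squeezing. Since the lemma is, as the paper says, essentially a matter of unwinding the definitions, the only steps that need any real care are the uniqueness-up-to-reversal argument in the first part and the bookkeeping that produces the closed formula for $d(X,L_\ell)$; once that formula is in hand, the equivalence is exactly the elementary squeeze above.
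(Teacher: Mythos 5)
Your proof is correct, and it is exactly the ``straightforward from the definitions'' unwinding that the paper omits: present $X^k$ via its consecutive gaps, compute $d(X^k,L_\ell)=\max\bigl(x_1^k,\,\ell-x_n^k,\,\tfrac12\max_i d_i^k\bigr)$, and squeeze the endpoint terms using $\sum_i d_i^k=x_n^k-x_1^k$. In particular you correctly handle the one point needing care, namely that the stated conditions depend only on the isometry class while Hausdorff convergence a priori depends on the embedding.
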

The reader unfamiliar with the Hausdorff metric can take this as the definition of convergence.  We can now see that the limiting magnitude of such spaces is well-defined.
\begin{prop}
If $(X^k)_{k=1}^\infty$ is a sequence of finite subsets of $L_\ell$, a straight line segment of length $\ell$, which
converges to $L_\ell $ then
the sequence of magnitudes converges:
   \[X^k\to L_\ell\quad\text{as }k\to\infty\quad \Rightarrow\quad|X^k|\to \ell/2 +1\quad\text{as }k\to\infty.\]
\end{prop}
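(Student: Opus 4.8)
The plan is to reduce everything to the explicit formula of Theorem~\ref{Thm:LinearMagnitude} and then to compare a sum of hyperbolic tangents with the corresponding sum of lengths. By the Lemma above, for each $k$ we may write $X^k\cong X_{\mathbf d^k}$ for some tuple $\mathbf d^k=(d_1^k,\dots,d_{n_k-1}^k)$ of strictly positive reals with $\sum_i d_i^k\to\ell$ and $\max_i d_i^k\to 0$ as $k\to\infty$, and Theorem~\ref{Thm:LinearMagnitude} then gives
\[
|X^k| = 1 + \sum_{i=1}^{n_k-1}\tanh(d_i^k/2).
\]
So it suffices to prove that $\sum_i \tanh(d_i^k/2)\to\ell/2$ as $k\to\infty$.

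The one elementary input needed is the global inequality $0\le y-\tanh y\le y^3/3$ for all $y\ge 0$; the left-hand inequality follows because $y\mapsto y-\tanh y$ has derivative $\tanh^2 y\ge 0$ and vanishes at $0$, and then the right-hand inequality follows because $y\mapsto y^3/3-(y-\tanh y)$ has derivative $y^2-\tanh^2 y\ge 0$ and also vanishes at $0$. Applying this with $y=d_i^k/2$ and summing over $i$ gives
\[
0\le \tfrac12\sum_i d_i^k \;-\; \sum_i \tanh(d_i^k/2)\;\le\; \frac1{24}\sum_i (d_i^k)^3 \;\le\; \frac1{24}\Bigl(\max_i d_i^k\Bigr)^{2}\sum_i d_i^k .
\]
Since $\sum_i d_i^k$ converges and is therefore bounded, while $\max_i d_i^k\to 0$, the right-hand side tends to $0$; and $\tfrac12\sum_i d_i^k\to\ell/2$. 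Hence $\sum_i\tanh(d_i^k/2)\to\ell/2$, and therefore $|X^k|\to 1+\ell/2$, as claimed.

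There is no real obstacle here: the argument is essentially a one-line consequence of the closed formula for the magnitude of a linear space together with the convexity-type estimate on $\tanh$. The only point that deserves a moment's care is that one needs an \emph{honest} bound $y-\tanh y\le cy^3$ valid for all $y\ge 0$ rather than a mere asymptotic statement as $y\to 0$, because it is summed over the (possibly large) number $n_k$ of gaps; the bound $y-\tanh y\le y^3/3$ supplies exactly this. It is perhaps worth remarking afterwards that the estimate shows the rate of convergence of $|X^k|$ is controlled by the mesh $\max_i d_i^k$ of the approximation, and in particular that the limit $\ell/2+1$ does not depend on the chosen approximating sequence, which is what licenses the definition $|L_\ell|:=\ell/2+1$.
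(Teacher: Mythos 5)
Your proof is correct and follows essentially the same route as the paper's: reduce via the Lemma and Theorem~\ref{Thm:LinearMagnitude} to showing $\sum_i\tanh(d_i^k/2)\to\ell/2$, then control the error by a global Taylor-type bound on $y-\tanh y$ summed against the mesh $\max_i d_i^k$. The only (immaterial) difference is that you use the cubic bound $0\le y-\tanh y\le y^3/3$ proved by monotonicity, where the paper uses $|\tanh c - c|\le c^2$ from the Lagrange remainder.
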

\begin{proof}
By the above lemma we can associate a sequence of tuples $(\mathbf{d}^k)$ such that  $\sum_i d^k_i\to \ell $ and  $\max_i(d_i^k)
\to 0$.
 Theorem~\ref{Thm:LinearMagnitude} implies $|X^k|=1+\sum_i \tanh (d_i^k/2)$. Thus it suffices to prove that $\sum_i \tanh (d_i^k/2)\to \ell/2 $ as $k\to \infty$.  This
requires a small amount of analysis.
%

We will first show that $\left| \tanh c -c\right|\le c^2$ for $c>0$.  By Lagrange's form for the remainder in the Taylor series,  there exists $\xi\in (0,c)$ such that
\[\tanh(c)=\tanh(0)+\tanh'(0)c+\tfrac12 {\tanh''(\xi)}c^2.\]
As $\tanh(0)=0$, $\tanh'(0)=1$ and $\tanh''(\xi)=2(\tanh^2(\xi)-1)\tanh(\xi)$ we get
 \[\left|\tanh(c)-c\right|=\left|(\tanh^2(\xi)-1)\tanh(\xi)\right|c^2<c^2,\]
as required; the last inequality due to the fact that $|\tanh(\xi)|<1$.

Now we can see
 \begin{align*}\textstyle
  \left|\sum_i \tanh (d_i^k/2) -\ell/2\right|
   &=\textstyle\left | \sum_i \tanh (d_i^k/2) - \sum_i d_i^k/2 +  \sum_i d_i^k/2 -\ell/2 \right |\\
   &\le\textstyle \left| \sum_i \left(\tanh (d_i^k/2) - d_i^k/2 \right)\right|
  + \frac12\left|\sum_i d_i^k -\ell \right|\\
  &\le\textstyle \sum_i (d^k_i/2)^2 +\tfrac12\left|\sum_i d_i^k -\ell \right|\\
  &\le \textstyle(\max_i d_i^k)\sum_id_i^k/4 +\frac12\left|\sum_i d_i^k -\ell \right|\\
  &\to 0\cdot \ell/4 +\tfrac12\cdot0 = 0 \quad \text{as }k\to \infty,
  \end{align*}
so $\sum_i \tanh (d_i^k/2)\to \ell/2 $ and $|X^k|\to \ell/2 +1$ as required.
\end{proof}
This proposition together with \cite[Corollary~2.7]{Meckes:PositiveDefinite} gives us the magnitude of any straight line segment.
\begin{thm}
The magnitude of the straight line segment of length $\ell $ has the following form.
\[|L_\ell |:=\ell/2 +1.\]
\end{thm}
This is precisely the conjectured form, even non-asymptotically, thus supporting part~(\ref{Item:StrongConjConvex}) of the Strong Asymptotic Conjecture in the introduction.

\section{The magnitude of a ternary cantor set}
Here we consider $T_\ell $, the ternary Cantor set of length $\ell $, where $\ell>0$.  We will calculate
the
magnitude $|T_\ell |$ of this Cantor set as a limit and show that the result is
consistent with the belief that asymptotically the magnitude satisfies the inclusion-exclusion principle.  A different approach to this calculation, using weight measures, is given in~\cite{Willerton:Homogeneous}. 

The ternary Cantor set $T_\ell $ is constructed by starting with a closed straight line segment of length $\ell $,
removing the open middle third, then removing the middle thirds of the the two
remaining components and continuing like this \textit{ad infinitum}.  This process
means that $T_{3\ell }$, the Cantor set of length $3\ell $, can be decomposed into two
copies of $T_\ell $, so
 \[T_{3\ell }=T_\ell \sqcup T_\ell .\]
Thus if $P$ is any valuation, i.e. satisfies the inclusion-exclusion principle, which is defined on
some collection of sets including these Cantor sets, then
\[P(T_{3\ell })=2P(T_\ell ).\]
Writing $p(\ell ):=P(T_\ell )$ we get the functional equation
\[p(3\ell )=2p(\ell ).\]
We can characterize the functions satisfying this functional equation in the following
way.
\begin{lemma}
\label{Lemma:FunctionalEquation}
Suppose $p\colon \R_{>0}\to \R$ is a function defined on the positive real numbers;
then $p$ satisfies the functional equation
\[p(3\ell )=2p(\ell )\quad\text{for all }\ell >0\]
if and only if $p$ is of the form
 \[p(\ell )=f(\ell )\ell ^{\log_3(2)}\]
where $f\colon \R_{>0}\to \R$ is some multiplicatively periodic function in the sense that
$f(3\ell )=f(\ell )$ for all $\ell >0$.
\end{lemma}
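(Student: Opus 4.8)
The plan is to prove both implications by direct substitution; the whole point is the single observation that the exponent $\log_3(2)$ is chosen precisely so that the power function $\ell\mapsto\ell^{\log_3(2)}$ absorbs the factor $2$ under tripling, i.e.
\[(3\ell)^{\log_3(2)}=3^{\log_3(2)}\,\ell^{\log_3(2)}=2\,\ell^{\log_3(2)}.\]

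For the ``if'' direction I would assume $p(\ell)=f(\ell)\ell^{\log_3(2)}$ with $f(3\ell)=f(\ell)$ for all $\ell>0$, substitute $3\ell$ for $\ell$, and compute
\[p(3\ell)=f(3\ell)\,(3\ell)^{\log_3(2)}=f(\ell)\cdot 2\,\ell^{\log_3(2)}=2p(\ell),\]
using the multiplicative periodicity of $f$ and the displayed identity.

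For the ``only if'' direction I would assume $p(3\ell)=2p(\ell)$ and note that, since $\ell^{\log_3(2)}>0$ for every $\ell>0$, the function $f(\ell):=p(\ell)\,\ell^{-\log_3(2)}$ is well-defined on $\R_{>0}$; it is moreover the only possible choice, which also takes care of whatever uniqueness one wants to read into the statement. One then checks directly that
\[f(3\ell)=p(3\ell)\,(3\ell)^{-\log_3(2)}=\bigl(2p(\ell)\bigr)\bigl(2\,\ell^{\log_3(2)}\bigr)^{-1}=p(\ell)\,\ell^{-\log_3(2)}=f(\ell),\]
so $f$ is multiplicatively periodic, and $p(\ell)=f(\ell)\ell^{\log_3(2)}$ holds by the definition of $f$.

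There is no real obstacle here: the argument is a routine manipulation of the functional equation, and the only thing worth flagging is the elementary identity $3^{\log_3(2)}=2$ that makes $\ell^{\log_3(2)}$ the ``right'' reference solution — exactly as in the classical characterisation of self-similar/homogeneous quantities by a periodic prefactor times a power law.
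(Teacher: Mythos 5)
Your proof is correct and follows essentially the same route as the paper: both directions are handled by direct substitution, with the converse using the same definition $f(\ell):=p(\ell)\,\ell^{-\log_3(2)}$ and the identity $3^{\log_3(2)}=2$.
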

Whilst such  multiplicatively periodic functions are less frequently encountered than their additive counterparts, they are no less common: to obtain such a multiplicatively periodic function $f$, pick a
period-one ordinarily periodic function $g\colon \R\to \R$ and define
$f(\ell ):=g(\log_3(\ell ))$.
\begin{proof}
Suppose first that $p$ is a solution of the functional equation, then define
$f(\ell ):=p(\ell )\ell ^{-\log_3(2)}$ for $\ell >0$.  As $3^{\log_3(2)}=2$ we have
 \[f(3\ell )=p(3\ell )(3\ell )^{-\log_3(2)}=2p(\ell )2^{-1}\ell ^{-\log_3(2)}=f(\ell ),\]
and so $p$ has the required form.

Conversely, if $f$ is a function satisfying $f(3\ell )=f(\ell )
$, then defining $p(\ell ):=f(\ell )\ell ^{\log_3(2)}$ is easily seen to give a function satisfying
the functional equation:
\[p(3\ell )=f(3\ell )(3\ell )^{\log_3(2)}=f(\ell )2\ell ^{\log_3(2)}=2p(\ell ),\]
as required.
\end{proof}
The appearance of $\log_3(2)$ here is not outrageous as it is the Hausdorff dimension of
the Cantor set $T_\ell $ for every $\ell $.
We will show that the magnitude $|T_\ell |$ is of the form $p(\ell )+q_2(\ell )$ where $p$
satisfies the above functional equation and $q_2(\ell )\to 0$ as $\ell \to \infty$.  Hence the
magnitude dimension (see Section~\ref{Section:MeckesResults}) of the Cantor set is equal to its Hausdorff dimension.

 We will use a more constructive definition of the Cantor set $T_\ell $.  We start with the
zeroth approximation $\TC{\ell}{0}$ which consists of two points on the real line a  distance $\ell $ apart.   Let $\psi_{1}$ and $\psi_{2}$ be the two scalings of the real line by a factor of $1/3$ with the two points of $\TC{\ell}{0}$ as their respective fixed points.  Define $\TC\ell{k}$, the $k$th approximation to the Cantor set, inductively by $\TC\ell {k}:=\psi_{1}(\TC\ell {k-1})\cup\psi_{2}(\TC\ell {k-1})$ for $k\ge1$.  Then by the work of Hutchinson (see~\cite{Hutchinson:FractalsSelfSimilarity}) the length $\ell $
Cantor set is the limit of these sets,
$T_\ell =\bigcup_k \TC\ell{k}$, and it is the unique non-empty compact subset satisfying $T_\ell=\psi_{1}(T_\ell)\cup\psi_{2}(T_\ell )$.  Using the formula for the magnitude of a set of points in a
line given above, it is easy to calculate the magnitudes of these finite approximations to
the Cantor set.
\begin{thm}
The magnitude of the $k$th approximation to the Cantor set of length $\ell$ is
\[\left|\TC\ell {k}\right|=1+2^{k}\tanh\left(\frac{\ell }{2\cdot3^k}\right)+\frac12\sum_{i=1}^{k} 2^i
\tanh\left(\frac{\ell }{2\cdot3^i}\right).\]
\end{thm}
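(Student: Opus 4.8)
The plan is to induct on $k$, using the self-similar decomposition $\TC{\ell}{k}=\mu_1(\TC{\ell}{k-1})\cup\mu_2(\TC{\ell}{k-1})$ together with the closed formula $|X_{\mathbf d}|=1+\sum_i\tanh(d_i/2)$ for linear metric spaces (Theorem~\ref{Thm:LinearMagnitude}). The first thing to record is that each $\TC{\ell}{k}$ is a finite subset of $\R$ and hence a linear metric space in the sense of the previous section: listing its points in increasing order, the distance between any two is the sum of the gaps between them. In particular Theorem~\ref{Thm:LinearMagnitude} applies, and from its formula one reads off a concatenation identity: if a linear space is assembled by placing a linear space with gap-tuple $\mathbf c$, then a gap of length $g$, then a linear space with gap-tuple $\mathbf c'$, its magnitude equals $|X_{\mathbf c}|+|X_{\mathbf c'}|-1+\tanh(g/2)$.

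Next I would pin down the geometry of $\TC{\ell}{k}$ for $k\ge1$. Placing the two points of $\TC{\ell}{0}$ at $0$ and $\ell$, so that $\mu_1$ fixes $0$ and $\mu_2$ fixes $\ell$, one checks by induction that $\TC{\ell}{k-1}\subseteq[0,\ell]$ with leftmost point $0$ and rightmost point $\ell$; hence $\mu_1(\TC{\ell}{k-1})\subseteq[0,\ell/3]$ has rightmost point $\ell/3$, while $\mu_2(\TC{\ell}{k-1})\subseteq[2\ell/3,\ell]$ has leftmost point $2\ell/3$. So the two halves lie in disjoint intervals, separated by a single gap of length $\ell/3$; and each half, being a translate of the $1/3$-scaling of $\TC{\ell}{k-1}$, is isometric to $\TC{\ell/3}{k-1}$. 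The concatenation identity then gives the recursion
\[
|\TC{\ell}{k}| = 2\,|\TC{\ell/3}{k-1}| - 1 + \tanh(\ell/6).
\]

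Finally I would run the induction. The base case $k=0$ is immediate: $\TC{\ell}{0}$ has the single gap $\ell$, so $|\TC{\ell}{0}|=1+\tanh(\ell/2)$, which is the stated formula with the sum empty. For the inductive step, I substitute the formula for $|\TC{\ell/3}{k-1}|$ into the recursion, reindex the resulting sum by $i\mapsto i+1$, absorb $\tanh(\ell/6)$ as the missing $i=1$ term, and split $3\cdot2^{k-1}\tanh(\ell/(2\cdot3^{k})) = 2^{k}\tanh(\ell/(2\cdot3^{k})) + \tfrac12\cdot2^{k}\tanh(\ell/(2\cdot3^{k}))$ to recover simultaneously the leading term $2^{k}\tanh(\ell/(2\cdot3^{k}))$ and the top term of the sum in the target formula; the rest is routine algebra. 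There is no genuine obstacle here. The two places that need a little care are (i) checking cleanly that the two halves of $\TC{\ell}{k}$ occupy disjoint intervals with exactly one gap of length $\ell/3$ between them, so that the concatenation identity really applies, and (ii) the index bookkeeping in the inductive step — it is worth verifying the case $k=1$ by hand, since that is where the finest-scale coefficient $3\cdot2^{k-1}$ first shows up and must be seen to reassemble correctly.
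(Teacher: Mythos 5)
Your proof is correct and follows essentially the same route as the paper: both apply Theorem~\ref{Thm:LinearMagnitude} to derive the recursion $|\TC{\ell}{k}| = 2|\TC{\ell/3}{k-1}| - 1 + \tanh(\ell/6)$ and then induct from the base case $|\TC{\ell}{0}| = 1+\tanh(\ell/2)$. You simply spell out the geometric justification of the recursion and the index bookkeeping (including the correct reassembly of the coefficient $3\cdot 2^{k-1}$ at the finest scale) in more detail than the paper, which leaves the induction as "straightforward".
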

\begin{proof} We use the formula of Theorem~\ref{Thm:LinearMagnitude} for the magnitude of a linear metric space in terms of the distances between neighbouring points.   Since the $(k+1)$th approximation, $\TC\ell{k+1}$, is two copies of  $\TC{\ell/3}{k}$ a distance $\ell/3$ apart, we know that for a pair of neighbouring points in $\TC\ell{k+1}$, either both are in the same copy of  $\TC{\ell/3}{k} $  or else they are in different copies and are a distance $\ell/3$ apart.  So by Theorem~\ref{Thm:LinearMagnitude}, for $k\ge0$,
 \begin{align*}
 \left|\TC\ell{k+1}\right|
   &=
  1+2\left(\left|\TC{\ell/3}{k}\right| -1\right)+\tanh\left(\frac\ell {2\cdot3}\right).
\end{align*}
As we know that $\left|\TC\ell{0}\right|=1+\tanh(\ell/2)$, the result follows from a straightforward induction argument.
\end{proof}
We compute the magnitude of the Cantor set as the limit of these magnitudes.
\begin{thm}
The magnitude of $T_\ell$, the length $\ell$ ternary Cantor set, is given by
\[\left|T_\ell \right|=1+\frac12\sum_{i=1}^{\infty} 2^i\tanh\left(\frac{\ell }{2\cdot3^i}\right).\]
\end{thm}
\begin{proof}
 Since $\left|\tanh c\right| \le c$ for all $c \ge 0$, the sum $\sum_{i = 1}^\infty 2^i
\tanh \left( {\ell}/{2\cdot3^i} \right)$ converges and $2^{k} \tanh (
{\ell}/{2\cdot3^k} ) \to 0$ as $k \to \infty$.  Hence $\lim_{k \to
\infty} |\TC\ell{k}|$ exists.  We know by \cite[Corollary~2.7]{Meckes:PositiveDefinite} that the magnitude of the length $\ell$ Cantor set is this limit, and so the result follows.
\end{proof}
This magnitude can be decomposed as promised above. First define
\[p(\ell ):=\frac12\sum_{i=-\infty}^{\infty} 2^i\tanh\left(\frac{\ell }{2\cdot3^i}\right);\quad
q_2(\ell ):=1-\frac12\sum_{i=0}^{\infty} \frac1{2^i}\tanh\left(\frac{ 3^i\ell}{2}\right).\]
Note the doubly infinite summation in the definition of $p$: the part of the sum indexed by negative $i$ and the sum in the definition of $q_2$ both converge because $\tanh$ is bounded.

The promised result is almost immediate.
\begin{thm}
\label{Thm:CantorMagDecomposition}
The magnitude of the length $\ell$ Cantor set decomposes uniquely as a piece satisfying the functional equation and a piece which is asymptotically zero:
\enlargethispage*{1em}
\begin{enumerate}
\item $|T_\ell |=p(\ell )+q_2(\ell )$;
\item $p(3\ell )=2p(\ell )$;
\item $q_2(\ell )\to 0$ as $\ell \to \infty$;
\item the functions $p$ and $q_2$ are uniquely determined by the above three properties.
\end{enumerate}
\end{thm}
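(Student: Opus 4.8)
The plan is to verify the three structural properties (1)--(3) directly from the explicit formulas just given for $p$ and $q_2$, and then to prove the uniqueness claim (4) by a short argument showing that the functional equation together with the decay condition pins down both summands.

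For property (1), I would simply add the two defining series. After reindexing the sum $\frac12\sum_{i=-\infty}^{\infty}2^i\tanh(\ell/2\cdot3^i)$ by splitting it into the $i\ge 1$ part, the $i=0$ term, and the $i\le -1$ part, the substitution $j=-i$ in the last piece turns it into $\frac12\sum_{j\ge 1}2^{-j}\tanh(3^j\ell/2)$; combined with the $i=0$ term $\frac12\tanh(\ell/2)$ this is exactly $\frac12\sum_{i\ge 0}2^{-i}\tanh(3^i\ell/2)$, which is $1-q_2(\ell)$. Hence $p(\ell)+q_2(\ell)=\frac12\sum_{i\ge 1}2^i\tanh(\ell/2\cdot3^i)+1=|T_\ell|$. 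For property (2), reindex $p(3\ell)=\frac12\sum_{i}2^i\tanh(3\ell/2\cdot3^i)=\frac12\sum_i 2^i\tanh(\ell/2\cdot3^{i-1})$ and shift $i\mapsto i+1$ to get $\frac12\sum_i 2^{i+1}\tanh(\ell/2\cdot3^i)=2p(\ell)$; the doubly infinite index makes this shift harmless. For property (3), use $|\tanh c|\le c$ on finitely many initial terms and $|\tanh c|\le 1$ on the tail: given $\epsilon>0$ choose $N$ with $\sum_{i>N}2^{-i}<\epsilon$, and then for $\ell$ large enough that $\frac12\sum_{i=0}^{N}2^{-i}\tanh(3^i\ell/2)$ is within $\epsilon$ of $\frac12\sum_{i=0}^N 2^{-i}=1-2^{-N-1}$; letting $\ell\to\infty$ and then $N\to\infty$ gives $q_2(\ell)\to 1-1=0$. (One should note that $q_2$ is actually positive and decreasing, but the bound above is all that is needed.)

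For uniqueness (4), suppose $|T_\ell|=\tilde p(\ell)+\tilde q(\ell)$ is another decomposition with $\tilde p(3\ell)=2\tilde p(\ell)$ and $\tilde q(\ell)\to 0$. Set $h:=p-\tilde p=\tilde q-q_2$. Then $h$ satisfies the same functional equation $h(3\ell)=2h(\ell)$, and $h(\ell)\to 0$ as $\ell\to\infty$ since both $\tilde q$ and $q_2$ do. But the functional equation gives $h(\ell)=2^{-k}h(3^k\ell)$ for all $k\ge 0$; fixing $\ell$ and letting $k\to\infty$, the right-hand side tends to $0$ because $h(3^k\ell)\to 0$ and $2^{-k}\to 0$. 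Hence $h\equiv 0$, so $\tilde p=p$ and $\tilde q=q_2$.

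I do not expect any serious obstacle here: all three properties are routine once the reindexing in (1) and (2) is carried out carefully, and the uniqueness argument is a clean three-line iteration of the functional equation combined with the decay hypothesis. The only point requiring a little care is keeping track of the convergence of the negatively-indexed tail of $p$ and of the series defining $q_2$, but both follow from boundedness of $\tanh$ together with the geometric factors $2^{\pm i}$, exactly as noted in the paragraph preceding the theorem statement.
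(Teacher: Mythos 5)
Your proposal is correct and follows essentially the same route as the paper: direct verification of (1)--(3) from the explicit series (the paper is terser, citing only $\tanh d \to 1$ for (3)), and for (4) the same argument of setting $h = p - \tilde p = \tilde q - q_2$ and iterating $h(\ell) = 2^{-k}h(3^k\ell) \to 0$. The only blemish is your invocation of $|\tanh c| \le c$ in the proof of (3), which is irrelevant there --- what your $\epsilon$--$N$ argument actually uses is $\tanh(3^i\ell/2) \to 1$ as $\ell \to \infty$ for each fixed $i$ --- but this does not affect the validity of the argument.
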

\begin{proof}
\begin{enumerate}
\item This follows by definition.
\item This follows immediately by substitution.
\item This follows from the fact that $\tanh d\to 1$ as $d\to \infty$.
\item Suppose $\tilde p$ and $\tilde q_{2}$ form another decomposition of $|T_{\ell}|$ with these properties.  Then defining $s:=p-\tilde p=\tilde q_{2}-q_{2}$ gives a function which satisfies both $s(3\ell)=2s(\ell)$ and $s(\ell)\to 0$ as $\ell\to \infty$, from which it follows that $s$ is identically zero, thus $\tilde p=p$ and $\tilde q_{2}=q_{2}$. \qedhere
\end{enumerate}
\end{proof}
The functions $|T_\ell |$ and $p(\ell )$ are plotted in Figure~\ref{figure:CantorCard}.
\begin{figure}
%
%
%
%
\begin{center}
\begin{tikzpicture}
\begin{axis}[
axis equal image=true,
axis x line=bottom, axis y line = left,
xmin=0,ymin=0, ymax=5, xmax=10,
xtick={0,10}, xticklabels={0,10},
ytick={0,1,5}, yticklabels={0,1,5},
x axis line style={style = -},y axis line style={style = -},
xlabel=$\ell$,
legend style={at={(1,0.1)},anchor=south east}
]
\addplot[mark=none] file {CantorMagGraph.table.tex};
\addplot [mark=none,dashed] file {CantorMagValuationGraph.table.tex};
\legend{$|T_\ell|$,$p(\ell)$};
\end{axis}

\end{tikzpicture}
\end{center}
\caption{The magnitude $|T_\ell|$ of the length $\ell$ Cantor set is asymptotically the same as the valuation-like function $p(\ell)$, and the latter satisfies $1.205\;\ell ^{\log_3(2)}< p(\ell )<1.206\;\ell ^{\log_3(2)}$ (according to \texttt{maple}).}
\label{figure:CantorCard}
\end{figure}
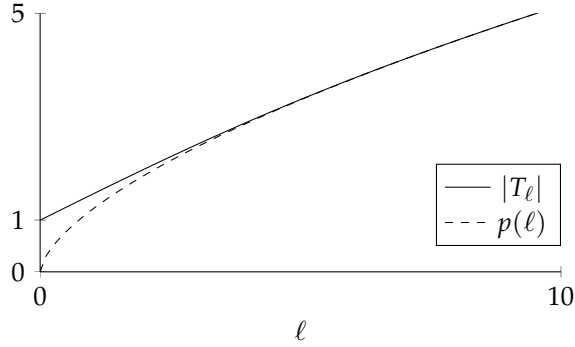

We can now get $p(\ell )$ into the form $f(\ell)\ell^{\log_{3}2}$ where $f$ is multiplicatively periodic as in Lemma~\ref{Lemma:FunctionalEquation}.  Define
 \[f(\ell ):=\ell ^{-\log_3(2)}\frac12\sum_{i=-\infty}^{\infty} 2^i\tanh\left(\frac{\ell }{2\cdot3^i}\right)
\]
so that $p(\ell )=f(\ell )\ell ^{\log_3(2)}$.  The multiplicative periodicity of $f$ is clear when it
is written in terms of the base-three logarithm of $\ell $:
 \[f(3^x)=\frac12\sum_{i=-\infty}^{\infty} 2^{i-x}\tanh\left(3^{x-i}/2\right).\]
%
%
It can be seen numerically that $f$ is not far from being a constant function; indeed, using \texttt{maple}, we can calculate the Fourier expansion which we see has rapidly decaying coefficients:
\[f(3^x)\simeq1.2054 + 2.48\times10^{-4}\sin(2\pi x + \theta_1)+3.36\times10^{-8}\sin(4\pi x+\theta_2)+\dots, \]
where $\theta_1$ and $\theta_2$ are some constants.  Intriguingly, similar near-constant functions arising from the same functional equation were studied in~\cite{BigginsBingham:NearConstancy}.  From these numerics we get the following bounds for the valuation-like function $p$:
\[1.205\;\ell ^{\log_3(2)}< p(\ell )<1.206\;\ell ^{\log_3(2)}.\]

In conclusion, Theorem~\ref{Thm:CantorMagDecomposition} tells us that although the magnitude of the Cantor set $|T_{\ell}|$ does \emph{not} satisfy the functional equation, so in general $|T_{3\ell}|\ne 2|T_{\ell}|$, it \emph{is} asymptotically equal to a function $p(\ell)$ that does satisfy the functional equation.  Moreover, $|T_{\ell}|$ has growth $\log_{3}2$, which is the Hausdorff dimension of the Cantor set.

\section{The magnitude of a circle}
In this section we consider the magnitude of circles.  There are actually several metrics that can be put on a circle.  In this paper we are primarily interested in subsets of Euclidean space with the subspace metric, so we will first consider the subspace metric on circles which are embedded in the natural `round' way in $\R^2$.  We will see that the magnitude of $C_\ell$, a circle of circumference $\ell$, is of the form $\left|C_\ell\right|= \ell/2 +q_3(\ell)$, where $q_3(\ell)\to 0$ as $\ell\to \infty$.   This requires some non-trivial asymptotic analysis.
After doing this we will move away from subspaces of Euclidean space and show that the same result holds for other natural metrics on the circle.  Despite considering non-Euclidean supspaces,  we can apply the same techniques since, by Theorem~3.6(6) of~\cite{Meckes:PositiveDefinite}, these metrics are positive definite.  As in~\cite{Willerton:Homogeneous}, the magnitude calculations performed here can also be done with weight measures.

\subsection{The subspace metric on the circle}
\label{Subsection:EucMetricOnCircle}
In this section we calculate the magnitude of a circle with the subspace-of-Euclidean-space metric, and then use Laplace's method from asymptotic analysis to show that as the length increases the magnitude becomes close to half the length.

First consider a circle $C_\ell$ of length, or circumference, $\ell$ as a subset of $\R^2$ with the induced metric.  This means that the distance between points $p_1$ and $p_2$ on the circle which subtend an angle $\theta$ at the origin is given by
 \[d(p_1,p_2)=\tfrac \ell \pi \sin \tfrac \theta 2,\]
as can be seen from the following picture.
  \[\raisebox{-.45\height}{\begin{picture}(0,0)%
\includegraphics{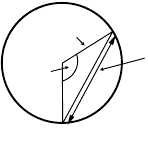}%
\end{picture}%
\setlength{\unitlength}{3947sp}%
\begingroup\makeatletter\ifx\SetFigFont\undefined%
\gdef\SetFigFont#1#2{%
  \fontsize{#1}{#2pt}%
  \selectfont}%
\fi\endgroup%
\begin{picture}(1268,1125)(256,-392)
\put(865,438){\makebox(0,0)[rb]{\smash{{\SetFigFont{8}{9.6}{\color[rgb]{0,0,0}$\tfrac \ell{2\pi}$}%
}}}}
\put(642,146){\makebox(0,0)[rb]{\smash{{\SetFigFont{8}{9.6}{\color[rgb]{0,0,0}$\theta$}%
}}}}
\put(1420,229){\makebox(0,0)[lb]{\smash{{\SetFigFont{8}{9.6}{\color[rgb]{0,0,0}$\tfrac{\ell}{\pi}\sin\tfrac{\theta}{2}$}%
}}}}
\put(1217,489){\makebox(0,0)[lb]{\smash{{\SetFigFont{8}{9.6}{\color[rgb]{0,0,0}$p_1$}%
}}}}
\put(755,-356){\makebox(0,0)[b]{\smash{{\SetFigFont{8}{9.6}{\color[rgb]{0,0,0}$p_2$}%
}}}}
\end{picture}%
}\]

 Now we will approximate the circle by a finite set of points.  We define $K^n_\ell$ to be a set of $n$ points equally spaced around the circle, equipped with the subspace metric.  This finite metric space is homogeneous, as it carries a transitive group action of the cyclic group of order $n$, so we can apply Speyer's Formula (Theorem~\ref{Thm:SpeyersFormula}) to see that the magnitude of this finite approximation to the circle is given by
  \[\left|K_\ell^n\right|=
 \frac n{\sum_{j=1}^n \exp\left(\frac{-\ell}{\pi} \sin\frac{\pi j}{n}\right)}
 =
 \frac 1{\sum_{j=1}^n \exp\left(\frac{-\ell}{\pi} \sin\frac{\pi j}{n}\right)
 \frac1n}.
 \]
 We can take the limit as the number of points tends to infinity and see that the denominator just consists of Riemann sums, so tends to an integral:
 \[\left | K_{\ell}^n\right|
             \to \frac 1{\int_{0}^1 \exp\left(\frac{-\ell}{\pi} \sin(\pi s)\right)\text{d}s} \quad\text{as }n\to\infty.\]
As these subsets converge to the circle, we know, by~\cite[Corollary~2.7]{Meckes:PositiveDefinite}, that the magnitude of the circle is just this limit; we have therefore proved the following.
\begin{thm}
For $C_\ell$ the length $\ell$ circle equipped with the Euclidean subspace metric, the magnitude is given by
\[|C_{\ell}|=\left(\int_{0}^1 \exp\left(-\tfrac{\ell}{\pi} \sin(\pi s)\right)\text{d}s\right)^{-1}.\]
\end{thm}
This magnitude is plotted for some values in Figure~\ref{Figure:MagnitudeSubspaceCircle}, and it is seen that the magnitude appears to be approaching half of the length; some classical asymptotic analysis shows that this is indeed the case.
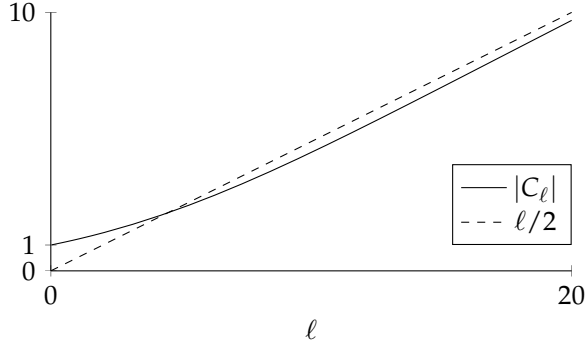
\begin{figure}
\begin{center}
%
%
%
%
\begin{tikzpicture}
\begin{axis}[
axis equal image=true,
axis x line=bottom, axis y line = left,
xmin=0,ymin=0, ymax=10, xmax=20,
xlabel=$\ell$,
xtick={0,20}, xticklabels={0,20},
ytick={0,1,10}, yticklabels={0,1,10},
x axis line style={style = -},y axis line style={style = -},
legend style={at={(1,0.1)},anchor=south east}
]
\addplot[mark=none] file {CircleMagGraphK0.table.tex};
\addplot [mark=none,dashed] coordinates {(0,0) (20,10)};
\legend{$|C_\ell|$,$\ell/2$};
\end{axis}

\end{tikzpicture}

\end{center}
\caption{The magnitude $|C_\ell|$ of the length $\ell$ circle with the subspace metric is asymptotically the same as half of the length.}
\label{Figure:MagnitudeSubspaceCircle}
\end{figure}
\begin{thm}\label{Thm:EucCircleAsymptotics}
For $C_\ell$ the length $\ell$ circle equipped with the Euclidean subspace metric, the magnitude satisfies
 \[\left|C_\ell\right|=\ell/2+q_3(\ell),\]
where $q_3(\ell)\to 0$ as $\ell\to \infty$.
\end{thm}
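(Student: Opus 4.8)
The plan is to apply Laplace's method to the integral $I(\ell) := \int_0^1 \exp\bigl(-\tfrac{\ell}{\pi}\sin(\pi s)\bigr)\,\mathrm ds$ and show that $I(\ell) \sim 2/\ell$ as $\ell \to \infty$, so that $|C_\ell| = 1/I(\ell) \sim \ell/2$; then a little more care gives that the error $|C_\ell| - \ell/2$ actually tends to $0$ rather than merely being $o(\ell)$. First I would observe that the phase function $\phi(s) = \sin(\pi s)$ on $[0,1]$ has its minimum value $0$ attained at the two endpoints $s=0$ and $s=1$, and by symmetry about $s=1/2$ it suffices to study $2\int_0^{1/2}\exp(-\tfrac{\ell}{\pi}\sin \pi s)\,\mathrm ds$. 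Near $s=0$ we have $\sin(\pi s) = \pi s - O(s^3)$, so the integrand is essentially $e^{-\ell s}$, and the standard Laplace/Watson-lemma heuristic predicts $\int_0^{1/2} e^{-\tfrac{\ell}{\pi}\sin \pi s}\,\mathrm ds \approx \int_0^\infty e^{-\ell s}\,\mathrm ds = 1/\ell$, hence $I(\ell) \approx 2/\ell$.

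To make this rigorous and to control the error well enough, I would substitute $u = \sin(\pi s)$ on $[0,1/2]$, giving $\mathrm ds = \tfrac{1}{\pi}\tfrac{\mathrm du}{\sqrt{1-u^2}}$ and
\[
I(\ell) = \frac{2}{\pi}\int_0^1 \frac{e^{-\ell u/\pi}}{\sqrt{1-u^2}}\,\mathrm du.
\]
Then I would write $\tfrac{1}{\sqrt{1-u^2}} = 1 + g(u)$ with $g(u) = \tfrac{1}{\sqrt{1-u^2}} - 1 \ge 0$ and $g(u) = O(u^2)$ near $0$, splitting the integral into a main term $\tfrac{2}{\pi}\int_0^1 e^{-\ell u/\pi}\,\mathrm du = \tfrac{2}{\ell}(1 - e^{-\ell/\pi})$ and a correction $\tfrac{2}{\pi}\int_0^1 e^{-\ell u/\pi} g(u)\,\mathrm du$. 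For the correction one estimates, using $g(u) \le C u^2$ on $[0,1/2]$ and the crude bound $e^{-\ell u/\pi} \le e^{-\ell/(2\pi)}$ together with integrability of $g$ on $[1/2,1)$, that it is $O(\ell^{-3}) + O(e^{-\ell/(2\pi)})$, certainly $o(\ell^{-2})$. Hence $I(\ell) = \tfrac{2}{\ell}\bigl(1 + O(\ell^{-1})\bigr)$ and so
\[
|C_\ell| = \frac{1}{I(\ell)} = \frac{\ell}{2}\bigl(1 + O(\ell^{-1})\bigr) = \frac{\ell}{2} + O(1),
\]
which only shows the error is bounded, not that it vanishes.

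To upgrade $O(1)$ to $o(1)$, the key is to get the \emph{next} term in the expansion of $I(\ell)$: the correction $\tfrac{2}{\pi}\int_0^1 e^{-\ell u/\pi} g(u)\,\mathrm du$ is $O(\ell^{-3})$, and the main term is $\tfrac{2}{\ell} - \tfrac{2}{\ell}e^{-\ell/\pi}$ with the second piece exponentially small, so in fact $I(\ell) = \tfrac{2}{\ell} + O(\ell^{-3})$, i.e. $\ell I(\ell)/2 = 1 + O(\ell^{-2})$. Therefore $|C_\ell| = \tfrac{2}{\ell\, I(\ell)} \cdot \tfrac{\ell}{2} \cdot \tfrac{2}{\ell}\cdot\tfrac{\ell}{2}$... more cleanly: $|C_\ell| = \dfrac{1}{I(\ell)} = \dfrac{\ell/2}{1 + O(\ell^{-2})} = \tfrac{\ell}{2}\bigl(1 + O(\ell^{-2})\bigr) = \tfrac{\ell}{2} + O(\ell^{-1})$, and hence $q_3(\ell) := |C_\ell| - \ell/2 = O(\ell^{-1}) \to 0$. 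The main obstacle is precisely this last bookkeeping: one must verify that the subleading contributions to $I(\ell)$ are $O(\ell^{-3})$ (not merely $o(\ell^{-2})$), since an honest $\Theta(\ell^{-2})$ term in $I(\ell)$ would survive as a nonzero constant in $|C_\ell|$ and destroy the claim; this is why the vanishing of the quadratic Taylor coefficient is morally what matters — here $g(u) = O(u^2)$ does the job, but the estimate has to be done carefully and uniformly, e.g. by splitting $[0,1/2]$ from $[1/2,1)$ so that the non-integrable singularity of $g$ at $u=1$ is handled separately by its exponentially small weight.
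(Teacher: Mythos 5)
Your proof is correct and takes essentially the same route as the paper: Laplace's method via the substitution $u=\sin(\pi s)$, with everything hinging on the absence of an $\ell^{-2}$ term in $I(\ell)$ --- the paper obtains this from $D_0''(0)=0$ fed into Watson's Lemma (cited from Miller's book), while you obtain it from $\tfrac{1}{\sqrt{1-u^2}}-1=O(u^2)$ by a hand computation that handles the integrable singularity at $u=1$ separately under its exponentially small weight. These are the same fact, your version is merely more self-contained, and the half-garbled line before your final display is harmless since the clean reciprocal computation that follows it is correct.
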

\begin{proof} We will in fact prove that $\left|C_\ell\right|=\ell/2+O(\ell^{-1})$ as $\ell\to \infty$; this follows by taking the reciprocal of both sides of the assertion
 \[\int_0^1 e^{-\frac{\ell }{\pi}\sin(\pi s)}\text{d}s = \frac 2{\ell }+ O(\ell^{-3})
     \qquad \text{as }\ell\to\infty,\]
 which is what we will now derive using the classical asymptotic analysis technique known as Laplace's method (see, for example,~\cite{Miller:AppliedAsymptoticAnalysis}).

Define $D_0(s):=\frac{1}{\pi}\sin(\pi s)$ for $s\in[0,1]$; this is plotted in Figure~\ref{fig:DistGraph}.  Thus $D_0$ is a function that
takes minimum value zero precisely at its endpoints, $D_0(0)=0=D_0(1)$, and is infinitely differentiable
 with $D_0'(0),D_0'(1)\ne 0$.
 We wish to consider the asymptotic behaviour, as $\ell\to\infty$, of the integral
 \[F(\ell):=\int_0^1e^{-\ell D_0(s)}\text{d}s.\]
 We break the integral into two pieces:
 \begin{align*}
 	F(\ell)&=\int_0^{1/2} e^{-\ell D_0(s)}\text{d}s
             +\int_{1/2}^1e^{-\ell D_0(s)}\text{d}s\\
            &=:\qquad F_\text{left}(\ell ) \qquad+\qquad F_\text{right}(\ell ).
 \end{align*}
Concentrating on the asymptotics of $F_{\text{left}}$, as $D_0$ is one-to-one on $[0,1/2]$, it has an inverse there, so we can use a change of variable and Watson's Lemma (see~\cite[Chapter 3.3]{Miller:AppliedAsymptoticAnalysis} for the details) to obtain
  \[F_\text{left}(\ell )=\frac{1}{D_0'(0)\ell } -\frac{D_0''(0)}{D_0'(0)^3 \ell ^2} +
  O(\ell ^{-3}) \qquad\text{as }\ell\to\infty.
  \]
As  $D_0'(0)=1$ and $D_0''(0)=0$,
we obtain
  \[F_\text{left}(\ell)=\frac{1}{\ell }
  +O(\ell ^{-3}).
  \]
By symmetry $F_{\text{right}}$ gives the same contribution; adding these together gives the asymptotics of the integral:
 \[\int_0^1 e^{-\frac{\ell }{\pi}\sin(\pi s)}\text{d}x =\frac{2}{\ell}
 +O(\ell ^{-3})\qquad\text{as }\ell\to\infty
 \]
which suffices to prove the theorem.
\end{proof}
The analysis in the proof can be easily extended to show that the magnitude function
asymptotically looks like $\ell/2 -\pi^2/2\ell +\dots$.

Similarly, one can try to do a Taylor expansion of the magnitude function around $\ell=0$.  For instance,
the derivative of $|C_{\ell }|$ at $\ell=0$ is $\frac{\ell}{\pi} \int^1_0 \sin(\pi s)\text{d}s={2\ell}/{\pi^2}$.  It is
not clear if the $\pi^2/2$ there is related to the
one in the preceding paragraph.

\subsection{The intrinsic metric}
Another obvious, perhaps more obvious, choice of metric on a length $\ell$ circle is the `arc-length' metric so that the distance between two points $p_1$ and $p_2$
on the circle which subtend an angle $\theta\in[0,\pi]$ at the centre is $\ell\theta/2\pi$.
 \[\raisebox{-.45\height}{\begin{picture}(0,0)%
\includegraphics{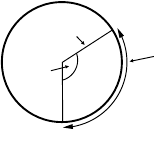}%
\end{picture}%
\setlength{\unitlength}{3947sp}%
\begingroup\makeatletter\ifx\SetFigFont\undefined%
\gdef\SetFigFont#1#2{%
  \fontsize{#1}{#2pt}%
  \selectfont}%
\fi\endgroup%
\begin{picture}(1286,1133)(256,-400)
\put(1235,526){\makebox(0,0)[lb]{\smash{{\SetFigFont{8}{9.6}{\color[rgb]{0,0,0}$p_1$}%
}}}}
\put(642,146){\makebox(0,0)[rb]{\smash{{\SetFigFont{8}{9.6}{\color[rgb]{0,0,0}$\theta$}%
}}}}
\put(865,438){\makebox(0,0)[rb]{\smash{{\SetFigFont{8}{9.6}{\color[rgb]{0,0,0}$\tfrac \ell{2\pi}$}%
}}}}
\put(1516,241){\makebox(0,0)[lb]{\smash{{\SetFigFont{8}{9.6}{\color[rgb]{0,0,0}$\tfrac{\ell \theta}{2\pi}$}%
}}}}
\put(758,-364){\makebox(0,0)[b]{\smash{{\SetFigFont{8}{9.6}{\color[rgb]{0,0,0}$p_2$}%
}}}}
\end{picture}%
}\]
 We will denote the length $\ell$ circle with this metric by $\bar C_\ell$.  On the one hand this metric can be viewed as `the' intrinsic metric on the circle which exists independent of any embedding and corresponds to the natural Riemannian structure on the circle.  On the other hand this can be seen as another way to put a `subspace metric' on a subset of a metric space: on a metric space one can define the arc-length of a path in the space and define a metric on any subspace by taking the distance between two points to be the length of the shortest path between them which lies in the subspace.

To calculate the magnitude of this metric space we again approximate the circle by a sequence of subsets of evenly spaced points.  Note that although this is not a metric inherited from Euclidean space, the metric is still positive definite~\cite[Theorem~3.6(6)]{Meckes:PositiveDefinite} and so the approximation approach still works.
So, let $\bar K^n_\ell$ be the metric space with $n$ points labelled $1,\dots,n$ such that the distance between the $i$th and $j$th points is given by
  \[d_{i,j}=\frac \ell n\min\left(|i-j|,n-|i-j|\right).\]
Again this is a homogeneous space as the cyclic group of order $n$ acts transitively on it, so we can apply Speyer's Formula (Theorem~\ref{Thm:SpeyersFormula}) to obtain the magnitude of this finite approximation
to the length $\ell $ circle with the intrinsic metric:
\[ \left|\bar K^n_\ell \right|=\frac n{\sum_{j=1}^{n}e^{-d_{1j}}}.\]
Now we can find the magnitude of the actual circle by letting the number of points tend to infinity whilst keeping the length fixed.
\begin{thm}
For $\bar C_\ell$ the length $\ell$ circle with its intrinsic metric, the magnitude is given by
  \[\left|\bar C_\ell \right| = \frac {\ell/2} {1-e^{-\ell/2 }}.\]
\end{thm}
\begin{proof}  We know by ~\cite[Corollary~2.7]{Meckes:PositiveDefinite}  that $\left|\bar K^n_\ell \right| \to \left|\bar C_\ell \right|$, so we just need to prove $\left|\bar K^n_\ell \right| \to \frac {\ell/2} {1-e^{-\ell/2 }}$.
There are at least two ways to prove this.  One can express the denominator in the formula for $|\bar K^n_\ell|$ as a
geometric progression and then take the limit, or one can express the limit of the
denominator as an integral.  The second method is the analogue of the method in the last subsection and
so that is the method given here.

As the limit of the sequence of magnitudes $|\bar K^1_\ell|,|\bar K^2_\ell|,|\bar K^3_\ell|,\dots$ 
exists it is equal to the limit of the `odd' subsequence of magnitudes  $|\bar K^1_\ell|,|\bar K^3_\ell|,|\bar K^5_\ell|,\dots$
We have
\[\left|\bar K_\ell ^{2m+1}\right|=\frac {2m+1}{2\sum_{k=1}^{m}e^{-\ell k/(2m+1)} +1}
=\frac {\ell }{2\sum_{k=1}^{m}e^{-\ell k/(2m+1)}\frac \ell {2m+1} +\frac \ell {2m+1}}.\]
Now observe that
\[\sum_{k=1}^{m}e^{-\ell k/(2m+1)}\frac \ell {2m+1}\longrightarrow \int^{\ell /2}_0
e^{-x} dx
= 1-e^{-\ell/2 }\quad\text{as }m\to\infty.\]
Thus
 \[\left|\bar K_\ell ^{2m+1}\right| \to \frac {\ell/2 }{1-e^{-\ell/2 }}\quad\text{as }m\to \infty,\]
and the proof is
completed.
\end{proof}
In other words we have
\[|\bar C_\ell | =\ \ell/2+\frac{\ell/2}{e^{\ell/2}-1},\]
and we immediately see that this has the suggested behaviour of being half of the length plus an asymptotically zero piece.  This case is even better behaved than the subspace metric case considered above, in that this required far less analysis to derive and the difference between the magnitude and half of the length is exponentially small in the length --- the magnitude and half of the length are asymptotically the same to all orders.

\subsection{Other round metrics on the circle}
As with the last subsection, this subsection is slightly away from the main theme of the paper in that it deals with metric spaces which are not subspaces of Euclidean space, but are subspaces of manifolds with the subspace metric; however, these are still positive definite spaces and this should be of some interest anyway.  In the above two subsections we saw that there are two obvious metrics on the circle, the Euclidean subspace metric and the intrinsic metric.  Here we will see that they are part of a family of metrics on the circle; each member of the family is a subspace metric got by considering the circle of length $\ell$ as the locus of points equidistant from some fixed point on a homogeneous surface of specified curvature.  Think, for instance, of a circle of fixed latitude on the Earth as points equidistant from the North Pole.  In particular the intrinsic metric on a circle comes from embedding it as an equator in a sphere.  We will see that the asymptotic relationship to half of the length of the circle holds in an appropriate sense.

As shown in Figure~\ref{fig:SphericalMetric} a circle of length $\ell$ can be embedded `roundly',%
\footnote{This terminology is just to distinguish a `round' circle from a topological circle.}
 by which we will mean as a locus of points equidistant from some fixed point, on any sphere of radius at least $\ell/2\pi$.  We will consider the plane to be a sphere of infinite radius and as being the limit of arbitrarily large spheres.  The curvature of a sphere of radius $R$ is $1/R^2$, so a length $\ell$ circle  can be roundly embedded in any sphere of curvature from $(2\pi/\ell)^2$ down to $0$; and it will be convenient to define the \emph{relative curvature} of a radius $R$ sphere to be
    \[\kappa:=\frac{1/R^2}{1/(\ell /2\pi)^2}=\frac {\ell ^2}{4\pi^2 R^2}.\]
Thus a circle of length $\ell$ can be embedded in spheres of relative curvature from $1$ down to $0$.  We can equip the length $\ell$ circle  with the subspace metric, call this the $\kappa$-metric and denote the resulting metric space by $C_{\ell,\kappa}$.  Thus $C_{\ell,1}$ is just $\bar C_{\ell}$, the circle with the intrinsic metric, and $C_{\ell,0}$ is just $C_{\ell}$, the circle with the Euclidean metric.

We will now give a formula for the $\kappa$-metric.  First it is necessary to decide how to parametrize the length $\ell$ circle; we will parametrize it by arc-length, whereas in the previous two subsections we have parametrized by angle subtended at the centre.  Define $D_{\ell,\kappa}\colon [0,\ell]\to \R$ by taking $D_{\ell,\kappa}(x)$   to be the subspace distance between two points a distance $x$ away on the length $l$ circle which is roundly embedded in the sphere (or plane) of relative curvature $\kappa$, as in Figure~\ref{fig:SphericalMetric}.   In the case $\kappa=0$, by the formula in Section~\ref{Subsection:EucMetricOnCircle}, we have
\[D_{\ell ,0}(x)=\frac \ell {\pi} \sin \frac{\pi x}
{\ell },\]
whereas for $0<\kappa\le 1$ we have the following.
\begin{thm}\label{Thm:FormulaForMetricOnSphere}
The subspace metric on the length $\ell $ circle embedded roundly in
the $2$-sphere of radius $R$ and of relative curvature $\kappa=\ell ^2/4\pi^2 R^2>0$ is given by
\[D_{\ell ,\kappa}(x)=\frac \ell {\sqrt{\kappa}\pi}\sin^{-1}\left(\sqrt{\kappa} \sin \frac{\pi x}
{\ell }\right),\]
where $\sin^{-1}$ is interpreted as a function $[0,1]\to[0,\pi/2]$.
\end{thm}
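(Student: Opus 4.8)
The plan is to work directly with spherical geometry. Set up coordinates on the sphere of radius $R$ so that the length $\ell$ circle sits as a circle of constant latitude (a ``round'' locus of points equidistant from the north pole), and compute the spherical (great-circle) distance between two points on this circle in terms of the arc-length separation $x$ along the circle. First I would record that the radius of the latitude-circle in $\R^3$ is $r=\ell/2\pi$, and that $r=R\sin\phi$ where $\phi$ is the polar angle of the latitude, so $\sin\phi=\ell/2\pi R=\sqrt{\kappa}$ by the definition of relative curvature. Two points on the circle separated by arc-length $x$ subtend an angle $2\pi x/\ell$ at the centre of the latitude-circle; the chord between them in $\R^3$ therefore has length $c=2r\sin(\pi x/\ell)=(\ell/\pi)\sin(\pi x/\ell)$, which is exactly $D_{\ell,0}(x)$.

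Next I would convert the chord length $c$ into a spherical distance. Two points on a sphere of radius $R$ at Euclidean chord-distance $c$ are at great-circle distance $R\,\psi$ where $c=2R\sin(\psi/2)$; hence $\psi=2\sin^{-1}(c/2R)$ and the spherical distance is
\[
D_{\ell,\kappa}(x)=2R\sin^{-1}\!\left(\frac{c}{2R}\right)
=2R\sin^{-1}\!\left(\frac{(\ell/2\pi)\sin(\pi x/\ell)}{R}\right).
\]
Now substitute $R=\ell/(2\pi\sqrt{\kappa})$, which gives $2R=\ell/(\pi\sqrt{\kappa})$ and $(\ell/2\pi)/R=\sqrt{\kappa}$, so the argument of $\sin^{-1}$ becomes $\sqrt{\kappa}\sin(\pi x/\ell)$ and the prefactor becomes $\ell/(\sqrt{\kappa}\pi)$, yielding
\[
D_{\ell,\kappa}(x)=\frac{\ell}{\sqrt{\kappa}\,\pi}\sin^{-1}\!\left(\sqrt{\kappa}\sin\frac{\pi x}{\ell}\right),
\]
as claimed. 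I would check the two sanity points: as $\kappa\to0$ the right-hand side tends to $(\ell/\pi)\sin(\pi x/\ell)=D_{\ell,0}(x)$ (so the formula interpolates correctly to the Euclidean case), and at $\kappa=1$ it gives $(\ell/\pi)\sin^{-1}(\sin(\pi x/\ell))=(\ell/\pi)(\pi x/\ell)=x$ for $x\in[0,\ell/2]$, recovering the intrinsic arc-length metric.

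The only genuine subtlety — and the step I would be most careful about — is the range issue flagged in the statement, namely that $\sin^{-1}$ must be read as the bijection $[0,1]\to[0,\pi/2]$. The point is that $x$ ranges over $[0,\ell]$ but the metric must be symmetric about $x=\ell/2$ (since arc-length $x$ and $\ell-x$ describe the same pair of points on the circle traversed the other way), and indeed $\sin(\pi x/\ell)$ is symmetric about $x=\ell/2$, so $D_{\ell,\kappa}$ is automatically symmetric; the formula as written is valid on all of $[0,\ell]$ provided we always take the principal branch. One must also confirm that this principal-branch great-circle distance is actually the spherical distance, i.e.\ that the shorter of the two great-circle arcs is being measured: since $\sqrt{\kappa}\sin(\pi x/\ell)\le 1$ with equality only in the degenerate equatorial case, the argument stays in $[0,1]$ and $\psi=2\sin^{-1}(c/2R)\in[0,\pi]$, so $R\psi$ is indeed the minimal great-circle distance. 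With these observations in place the computation above is complete.
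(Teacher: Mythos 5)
Your proposal is correct and follows essentially the same route as the paper: compute the Euclidean chord length between the two points from the geometry of the length-$\ell$ circle, equate it with the chord-length formula $2R\sin(\psi/2)$ for the great circle, and solve for the spherical distance. Your additional remarks on the principal branch of $\sin^{-1}$ and the limiting cases $\kappa\to 0$ and $\kappa=1$ are welcome refinements that the paper leaves implicit.
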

\begin{proof}
The essential ideas of the proof are contained in Figure~\ref{fig:SphericalMetric}.
Suppose that we have a round, length $\ell $ circle on a sphere of radius $R$.  We will
make things easier for ourselves by embedding the $2$-sphere in the standard way into
Euclidean $3$-space as in Figure~\ref{fig:SphericalMetric}.  Suppose further that $P$
and $Q$ are two points on the circle such that the arc of the circle between them has
length $x$.  We wish to find $D_{\ell ,\kappa}(x)$ which is the spherical distance
between them, that is to say the the length of the arc of a great circle from $P$ to $Q$,
where a great circle is a circle on the sphere whose centre is at the origin.

\begin{figure}
\[\raisebox{-.45\height}{\input{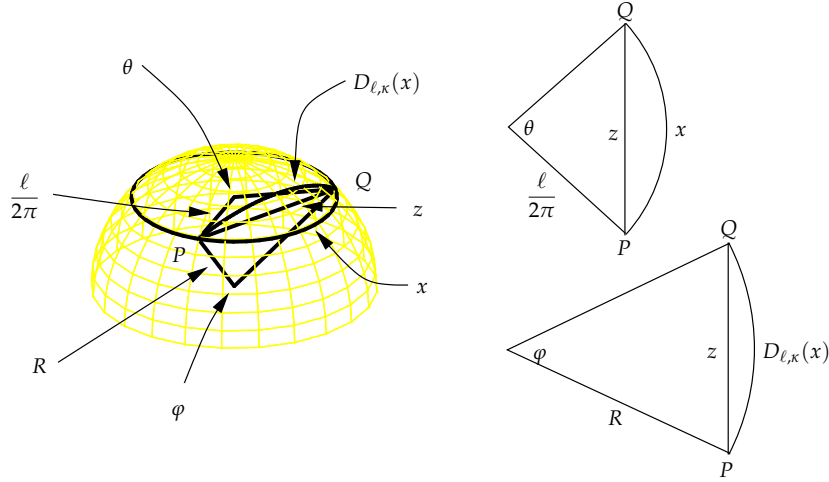}}\]
\caption{Calculating the spherical distance between points $P$ and $Q$ on a length $
\ell $ circle embedded on a radius $R$ sphere: $x$ is the circular distance between the
points; $z$ is the Euclidean distance between the points; and $D_{\ell ,\kappa}(x)$ is
the spherical distance between the points.}
\label{fig:SphericalMetric}
\end{figure}

The points $P$ and $Q$ are the endpoints of segments of two circles, the circle of length
$\ell $ and the great circle passing through $P$ and $Q$.  These two segments are
pictured in Figure~\ref{fig:SphericalMetric} with $\theta$ written for the angle
subtended by $P$ and $Q$ at the centre of the length $\ell $ circle, $\phi$ written for
the angle subtended by $P$ and $Q$ at the centre of the sphere, and $z$ written for
the Euclidean distance between $P$ and $Q$.  From the two segments it is seen that
\[z=\frac \ell \pi \sin\frac\theta 2 \quad\text{and}\quad z=2R\sin\frac\phi 2,\]
but the angles $\theta$ and $\phi$ are seen to be given by
\[\theta=\frac {2\pi x}{\ell }\quad\text{and}\quad\phi=\frac{D_{\ell ,\kappa}(x)}{R},\]
so equating the two expressions for $z$ we get
\[\frac \ell \pi \sin\frac {\pi x}{\ell }=2R\sin\frac{D_{\ell ,\kappa}(x)}{2R}\]
whence
\[D_{\ell ,\kappa}(x)=2R\sin^{-1}\left(\frac{\ell }{2\pi R} \sin \frac{\pi x}{\ell }\right)
=\frac \ell {\sqrt{\kappa}\pi}\sin^{-1}\left(\sqrt{\kappa} \sin \frac{\pi x}{\ell }\right),\]
which is what was required.
\end{proof}

 Note that when $\kappa=1$ this recovers the intrinsic metric  and that
$D_{\ell ,\kappa}\to D_{\ell ,0}$ as $\kappa\to 0$; this corresponds to the idea that locally a large sphere
looks metrically like a patch of the Euclidean plane.  Thus this gives a family of metrics interpolating between the
Euclidean and intrinsic ones.

 This family of metrics can actually be extended by taking the relative curvature $
\kappa$ to be in $(-\infty,1]$.  The formula for $D_{\ell ,\kappa}$ is valid for negative $
\kappa$ and corresponds to a metric on the length $\ell $ circle induced by embedding
it roundly in a suitably curved hyperbolic space; this can be proved either by geometric
means akin to those in Theorem~\ref{Thm:FormulaForMetricOnSphere}, or by the standard
algebraic trick of considering a hyperbolic plane as a sphere of imaginary radius.  If the
reader is unhappy with the imaginary quantities in the expression for $D_{\ell ,\kappa}$
when $\kappa$ is negative then they could perhaps be reassured by rewriting the
expression for $D_{\ell ,\kappa}$ in terms of hyperbolic sines:
  \[D_{\ell ,\kappa}(x)=\frac{\ell }{\sqrt{-\kappa}\pi}\sinh^{-1}\left(\sqrt{-\kappa}\sin
\frac{\pi x}\ell \right )\quad\text{for }x\in [0,\ell ].\]

 Observe that $D_{\ell ,\kappa}$ has the scaling property
 \[D_{\ell ,\kappa}(\ell s)=\ell D_{1,\kappa}(s)\quad\text{for }s\in[0,1],\]
which is why we chose to parametrize by the relative curvature $\kappa$.
This means that we can concentrate on $D_{1,\kappa}$ which we will write as $D_\kappa$.



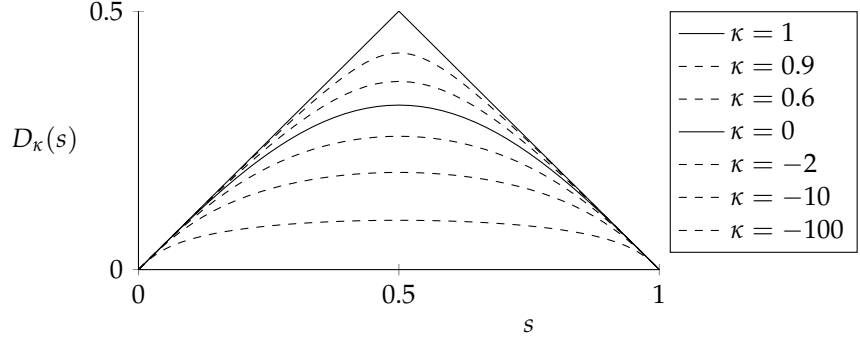
\begin{figure}
\begin{center}
%
%
%
%
\begin{tikzpicture}
\begin{axis}[
axis equal image=true,
axis x line=bottom, axis y line = left,
xmin=0,ymin=0, ymax=0.5, xmax=1,
xlabel=$s$, ylabel=$D_\kappa(s)$,
y label style = {rotate=-90}, x label style={at={(0.75,0)}},
xtick={0,0.5,1},
ytick={0,0.5}, 
x axis line style={style = -},y axis line style={style = -},
legend style={at={(1.02,1)},anchor=north west,cells={anchor=west}}
]
\addplot [mark=none] coordinates {(0,0) (0.5,0.5) (1,0)};
\addplot [mark=none,dashed]  file {CircleDistanceGraphkappa0.9.dat.tex};
\addplot [mark=none,dashed] file {CircleDistanceGraphkappa0.6.dat.tex};
\addplot [mark=none] file {CircleDistanceGraphkappa0.dat.tex};
\addplot [mark=none,dashed] file {CircleDistanceGraphkappa-2.dat.tex};
\addplot [mark=none,dashed] file {CircleDistanceGraphkappa-10.dat.tex};
\addplot [mark=none,dashed] file {CircleDistanceGraphkappa-100.dat.tex};
\legend{$\kappa=1$,$\kappa=0.9$,$\kappa=0.6$,$\kappa=0$,$\kappa=-2$,$\kappa=-10$,$\kappa=-100$};

\end{axis}

\end{tikzpicture}

\end{center}
 \caption{The graph of $D_\kappa$ for some values of $\kappa
$ from $1$ at the top to $-100$ at the bottom.  The two solid lines are $\kappa=1$ and $\kappa=0$ corresponding to the intrinsic and Euclidean metrics on the circle respectively.}
 \label{fig:DistGraph}
 \end{figure}
 The graph of $D_\kappa$ for some values of $\kappa$ is plotted in Figure~
\ref{fig:DistGraph}.  Note that if $\kappa$ is taken to be sufficiently negative then the
diameter of the length $1$ circle can be made arbitrarily small.  Note as well that for
all values of $\kappa$
  \[D'_\kappa(0)=+1,\quad D'_\kappa(1)=-1,\quad\text{and}\quad
  D''_\kappa(0)=0=D''_\kappa(1).\]
These are the key facts which will be used below and they are saying that all of these
metrics are infinitesimally the same to second order.  This is related to the fact that
they all correspond to the standard \emph{Riemannian} metric on the circle.

 Just as in Section~\ref{Subsection:EucMetricOnCircle}
we can approximate $C_{\ell ,\kappa}$, a circle of length $l$ with the $\kappa$-metric
for $\kappa\in (-\infty,1]$, by using a set of $n$ points equally
spaced on the circle and equipped with the subspace metric.   Exactly
the same argument leads us to the magnitude of the length $l$ circle with the $\kappa$ metric.
\begin{thm}
For any $\kappa\in(-\infty,1]$ the magnitude of the the $\kappa$-metric length
$\ell $ circle satisfies
\[\left | C_{\ell ,\kappa}\right| :=
     \left({\int_{0}^1 e^{-\ell D_\kappa
             \left(s \right)}\text{d}s}\right)^{-1}.\]
and asymptotically it behaves as follows
  \[\left| C_{\ell ,\kappa}\right| = \ell/2 +O(\ell^{-1}) \quad\text{as }\ell\to\infty.\]
\end{thm}
\begin{proof}The calculation of the magnitude is the same as in  Section~\ref{Subsection:EucMetricOnCircle}.  Similarly, the asymptotic calculation requires
the same argument as used in the proof of Theorem~\ref{Thm:EucCircleAsymptotics}.  The relevant facts about $D_{\kappa}$ used are the following.
\begin{gather*}
D_\kappa(0)=0=D_\kappa(1); \quad
 D_\kappa(x)>0 \text{ for }x\in(0,1);\\
  D_\kappa'(0)=1=-D_\kappa'(1); \quad D_\kappa''(0)=0=D_\kappa''(1).\qedhere
 \end{gather*}
\end{proof}
Just as with Theorem~\ref{Thm:EucCircleAsymptotics} the analysis in the proof can be easily extended to show that the magnitude function
asymptotically looks like $\ell/2 +\pi^2(\kappa-1)/2\ell +\dots$.

To conclude, we note that whilst the magnitudes associated to these metrics on a circle are all different, they all have the same asymptotic behaviour.  These ideas have subsequently
been developed further in~\cite{Willerton:Homogeneous} where the magnitude of higher dimensional spheres with their intrinsic metric are calculated and the asymptotics of the magnitudes of homogenous Riemannian manifolds are given.  In that paper the calculations are done using weight measures.  Here we have confined ourselves to the more elementary, and perhaps more widely applicable, method of finite approximation.

%

\begin{thebibliography}{99}
%
\bibitem{BergerLeinster:EulerCharDivergentSeries} C.~Berger, T.~Leinster,
  \href{http://intlpress.com/HHA/v10/n1/a3/}{\textit{The Euler characteristic of a category as the sum of a divergent series}},
   Homology, Homotopy and Applications \textbf{10} (2008), 41--51.
%
\bibitem{BigginsBingham:NearConstancy}
 J.~D.~Biggins, N.~H.~Bingham,
  \textit{Near-constancy phenomena in branching processes},
  Mathematical Proceedings of the Cambridge Philosophical Society
  \textbf{110} (1991), 545-558.
%
\bibitem{Borceux:Handbook1} F.~Borceux,
  \textit{Handbook of Categorical Algebra 1: Basic Category Theory},
  Encyclopedia of Mathematics and its Applications \textbf{50}, 
  Cambridge University Press, 1994.
 %
\bibitem{Borceux:Handbook2} F.~Borceux,
  \textit{Handbook of Categorical Algebra 2: Categories and Structures},
  Encyclopedia of Mathematics and its Applications \textbf{51}, 
  Cambridge University Press, 1994.
%
\bibitem{Falconer:Book} K.~A.~Falconer,
\textit{Fractal Geometry: Mathematical Foundations and Applications},
Wiley, 2003.
%
\bibitem{HornJohnson:MatrixAnalysis}
R.~A.~Horn and C.~R.~Johnson,  
   \textit{Matrix Analysis},
    Cambridge University Press, 1985.
%
\bibitem{Hutchinson:FractalsSelfSimilarity}
J.~E.~Hutchinson,
\textit{Fractals and self-similarity},
{Indiana University Mathematics Journal}
\textbf{30} (1981), no.~5, 713--747.
%
\bibitem{KlainRota:Book} D.~A.~Klain, G.-C.~Rota,
  \textit{Introduction to Geometric Probability},
  Cambridge University Press, 1997.
  %
\bibitem{Lawvere:MetricSpaces} F.~W.~Lawvere,
  \href{http://www.tac.mta.ca/tac/reprints/articles/1/tr1abs.html}{\textit{Metric spaces, generalized logic and closed categories}};
  originally published in
  Rendiconti del Seminario Matematico e Fisico di Milano, \textbf{XLIII} (1973),  135--166;
republished in
Reprints in Theory and Applications of Categories, \textbf{1} (2002), 1--37.
%
\bibitem{Leinster:Cardinality} T.~Leinster,
  \href{http://www.math.uni-bielefeld.de/documenta/vol-13/02.html}{\textit{The Euler characteristic of a category}},
  Documenta Mathematica \textbf{13} (2008), 21--49.
%
\bibitem{Leinster:MetricSpacesBlogPost} T.~Leinster,
  \textit{Metric spaces},
  post at The $n$-Category Caf\'e, February 2008. \url{http://golem.ph.utexas.edu/category/2008/02/metric_spaces.html}
\bibitem{Leinster:MaximumEntropy} T.~Leinster,
  \textit{A maximum entropy theorem with applications to the measurement of  biodiversity},
  arXiv preprint.  \url{http://arxiv.org/abs/0910.0906v4}
%
\bibitem{Leinster:Magnitude} T.~Leinster,
  \textit{The magnitude of metric spaces},
  arXiv preprint. \url{http://arxiv.org/abs/1012.5857v3}
%
\bibitem{MacLane:Categories} S.~Mac Lane,
  \textit{Categories for the Working Mathematician},
  Second Edition,  
  Graduate Texts in Mathematics \textbf{5}, Cambridge University Press, 1998.
%
\bibitem{Meckes:PositiveDefinite} M.~Meckes,
  \textit{Positive definite metric spaces},
  arXiv preprint.
 \url{http://arxiv.org/abs/1012.5863v3}
%
\bibitem{Miller:AppliedAsymptoticAnalysis} P.~D.~Miller,
  \textit{Applied Asymptotic Analysis},
  Graduate Studies in Mathematics 75, American Mathematical Society,
2006.
%
\bibitem{SolowPolasky:MeasuringBiologicalDiversity}  A.~Solow, S.~Polasky,
    \textit{Measuring biological diversity},
    Environmental and Ecological Statistics \textbf{1} (1994), 95--107.
%
\bibitem{Speyer:ncat} D.~Speyer,
\textit{Re: Metric spaces},
reply to \cite{Leinster:MetricSpacesBlogPost}, February 2008.
\url{http://golem.ph.utexas.edu/category/2008/02/metric_spaces.html#c014950}
%
\bibitem{Willerton:Heuristics} S.~Willerton,
    \textit{Heuristic and computer calculations for the magnitude of metric spaces},
    arXiv preprint. \url{http://arxiv.org/abs/0910.5500v1}
%
\bibitem{Willerton:Homogeneous} S.~Willerton,
    \textit{On the magnitude of spheres, surfaces and other homogeneous spaces},
    arXiv preprint. \url{http://arxiv.org/abs/1005.4041v1}
%
\end{thebibliography}
\end{document}